 \newtheorem{theorem}{Theorem}[section]
 \newtheorem{lemma}[theorem]{Lemma}
 \newtheorem{corollary}[theorem]{Corollary}
 \theoremstyle{definition}
 \newtheorem{definition}[theorem]{Definition}
 \theoremstyle{remark}
 \newtheorem{remark}[theorem]{Remark}
 \numberwithin{equation}{section}
\newcommand{\N}{\mathbb{N}}
\newcommand{\R}{\mathbb{R}}
\newcommand{\Z}{\mathbb{Z}}
\newcommand{\cA}{\mathcal{A}}
\newcommand{\cB}{\mathcal{B}}
\newcommand{\cM}{\mathcal{M}}
\newcommand{\cP}{\mathcal{P}}
\newcommand{\eps}{\varepsilon}
\begin{document}
%
%
%
%
%
%
%
%
%
\title[The Cauchy Singular Integral Operator]
{The Cauchy Singular Integral Operator\\
on Weighted Variable Lebesgue Spaces}
\author[A.~Yu.~Karlovich]{Alexei Yu. Karlovich}
\address{
Departamento de Matem\'atica\\
Faculdade de Ci\^encias e Tecnologia\\
Universidade Nova de Lisboa\\
Quinta da Torre\\
2829--516 Caparica\\
Portugal}
\email{oyk@fct.unl.pt}

\author[I.~M.~Spitkovsky]{Ilya M. Spitkovsky}
\address{
Department of Mathematics\\
College of William \& Mary\\
Williamsburg, VA, 23187-8795\\
U.S.A.} \email{ilya@math.wm.edu}

\subjclass{Primary 42A50; Secondary 42B25, 46E30}

\keywords{%
Weighted variable Lebesgue space,
log-H\"older continuous variable exponent,
Cauchy singular integral operator,
Hardy-Littlewood maximal operator.
}

\thanks{The first author is partially supported by FCT project PEstOE/MAT/UI4032/2011 (Portugal).}
\begin{abstract}
Let $p:\R\to(1,\infty)$ be a globally log-H\"older continuous variable exponent
and $w:\R\to[0,\infty]$ be a weight. We prove that the Cauchy singular integral
operator $S$ is bounded on the weighted variable Lebesgue space
$L^{p(\cdot)}(\R,w)=\{f:fw\in L^{p(\cdot)}(\R)\}$ if and only if the weight $w$
satisfies
\[
\sup_{-\infty<a<b<\infty}
\frac{1}{b-a}\|w\chi_{(a,b)}\|_{p(\cdot)}\|w^{-1}\chi_{(a,b)}\|_{p'(\cdot)}<\infty
\quad (1/p(x)+1/p'(x)=1).
\]
\end{abstract}
\maketitle
\section{Introduction}
Let $p:\R\to[1,\infty]$ be a measurable a.e. finite function. By
$L^{p(\cdot)}(\R)$ we denote the set of all complex-valued
functions $f$ on $\R$ such that
\[
I_{p(\cdot)}(f/\lambda):=\int_{\R} |f(x)/\lambda|^{p(x)} dx <\infty
\]
for some $\lambda>0$. This set becomes a Banach space when
equipped with the norm
\[
\|f\|_{p(\cdot)}:=\inf\big\{\lambda>0: I_{p(\cdot)}(f/\lambda)\le 1\big\}.
\]
It is easy to see that if $p$ is constant, then $L^{p(\cdot)}(\R)$ is nothing but
the standard Lebesgue space $L^p(\R)$. The space $L^{p(\cdot)}(\R)$
is referred to as a \textit{variable Lebesgue space}.

A measurable function $w:\R\to[0,\infty]$ is referred to as a \textit{weight}
whenever $0<w(x)<\infty$ a.e. on $\R$. Given a variable exponent $p:\R\to[1,\infty]$ and
a weight $w:\R\to[0,\infty]$, we define  the weighted variable exponent space
$L^{p(\cdot)}(\R,w)$ as the space of all measurable complex-valued functions $f$
such that $fw\in L^{p(\cdot)}(\R)$. The norm on this space is naturally defined
by
\[
\|f\|_{p(\cdot),w}:=\|fw\|_{p(\cdot)}.
\]

Given $f\in L_{\rm loc}^1(\R)$, the Hardy-Littlewood maximal operator is
defined by
\[
Mf(x):=\sup_{Q\ni x}\frac{1}{|Q|}\int_Q|f(y)|dy
\]
where the supremum is taken over all intervals $Q\subset\R$
containing $x$. The Cauchy singular integral operator $S$ is defined for
$f\in L^1_{\rm loc}(\R)$ by
\[
(Sf)(x):=\frac{1}{\pi i}\int_\R\frac{f(\tau)}{\tau-x}d\tau
\quad(x\in\R),
\]
where the integral is understood in the principal value sense.

Following \cite[Section~2]{CDH11} or \cite[Section~4.1]{DHHR11}, one says that
$\alpha:\R\to\R$ is locally $\log$-H\"older continuous if there exists $c_1>0$
such that
\[
|\alpha(x)-\alpha(y)|\le \frac{c_1}{\log(e+1/|x-y|)}
\]
for all $x,y\in\R$. Further, $\alpha$ is said to satisfy the $\log$-H\"older decay
condition if there exist $\alpha_\infty\in\R$ and a constant $c_2>0$ such that
\[
|\alpha(x)-\alpha_\infty|\le\frac{c_2}{\log(e+|x|)}
\]
for all $x\in\R$. One says that $\alpha$ is globally $\log$-H\"older continuous
on $\R$ if it is locally $\log$-H\"older continuous and satisfies the $\log$-H\"older
decay condition. Put
\[
p_-:=\operatornamewithlimits{ess\,inf}_{x\in\R}p(x),
\quad
\operatornamewithlimits{ess\,sup}_{x\in\R}p(x)=:p_+.
\]

As usual, we use the convention $1/\infty:=0$ and denote by $\cP^{\log}(\R)$
the set of all variable exponents such that $1/p$ is globally $\log$-H\"older
continuous. If $p\in\cP^{\log}(\R)$, then the limit
\[
\frac{1}{p(\infty)}:=\lim_{|x|\to\infty}\frac{1}{p(x)}
\]
exists. If $p_+<\infty$, then $p\in\cP^{\log}(\R)$ if and only if $p$ is globally
$\log$-H\"older continuous.

By \cite[Theorem~4.3.8]{DHHR11}, if $p\in\cP^{\log}(\R)$ with $p_->1$,
then the Hardy-Littlewood maximal operator $M$ is bounded on $L^{p(\cdot)}(\R)$.
Notice, however, that the condition $p\in\cP^{\log}(\R)$ is not necessary,
there are even discontinuous exponents $p$ such that $M$ is bounded on
$L^{p(\cdot)}(\R)$. Corresponding examples were first constructed by Lerner
and they are contained in \cite[Section~5.1]{DHHR11}.

In this paper we will mainly suppose that
\begin{equation}\label{eq:exponents}
1<p_-,\quad p_+<\infty.
\end{equation}
Under these conditions, the space $L^{p(\cdot)}(\R)$ is separable and reflexive,
and its Banach space dual $[L^{p(\cdot)}(\R)]^*$ is isomorphic to $L^{p'(\cdot)}(\R)$, where
\[
1/p(x)+1/p'(x)=1 \quad(x\in\R)
\]
(see \cite[Chap.~3]{DHHR11}). If, in addition, $w\chi_E\in L^{p(\cdot)}(\R)$ and
$\chi_E/w\in L^{p'(\cdot)}(\R)$ for any measurable set $E\subset\R$ of finite measure,
then $L^{p(\cdot)}(\R,w)$ is a Banach function space and $[L^{p(\cdot)}(\R,w)]^*=L^{p(\cdot)}(\R,w^{-1})$.
Here and in what follows, $\chi_E$ denotes the characteristic function of the set $E$.

Probably, one of the simplest weights is the following power weight
\begin{equation}\label{eq:power}
w(x):=|x-i|^{\lambda_\infty}\prod_{j=1}^m|x-x_j|^{\lambda_j},
\end{equation}
where $-\infty<x_1<\dots<x_m<+\infty$ and $\lambda_1,\dots,\lambda_m,\lambda_\infty\in\R$.
Kokilashvili, Paatashvili, and Samko studied the boundedness
of the operators $M$ and $S$ on $L^{p(\cdot)}(\R,w)$ with power weights
\eqref{eq:power}. From \cite[Theorem~A]{KPS06} and \cite[Theorem~B]{KS08} one
can extract the following result.
\begin{theorem}\label{th:KS}
Let $p\in\cP^{\log}(\R)$ satisfy \eqref{eq:exponents} and $w$ be a power weight
\eqref{eq:power}.

\begin{enumerate}
\item[{\rm(a)}] \textbf{(Kokilashvili, Samko).}
Suppose, in addition, that $p$ is constant outside an interval containing
$x_1,\dots,x_m$. Then the Hardy-Littlewood maximal operator
$M$ is bounded on $L^{p(\cdot)}(\R,w)$ if and only if
\begin{equation}\label{eq:KS}
0<\frac{1}{p(x_j)}+\lambda_j<1\text{ for } j\in\{1,\dots,m\},
\quad
0<\frac{1}{p(\infty)}+\lambda_\infty+\sum_{j=1}^m\lambda_j<1.
\end{equation}

\item[{\rm(b)}] \textbf{(Kokilashvili, Paatashvili, Samko).}
The Cauchy singular integral operator $S$ is bounded on $L^{p(\cdot)}(\R,w)$
if and only if \eqref{eq:KS} is fulfilled.
\end{enumerate}
\end{theorem}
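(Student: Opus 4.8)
The natural plan is to reduce both parts to the Muckenhoupt-type condition $A_{p(\cdot)}(w)<\infty$, where
\[
A_{p(\cdot)}(w):=\sup_{-\infty<a<b<\infty}\frac{1}{b-a}\,\|w\chi_{(a,b)}\|_{p(\cdot)}\,\|w^{-1}\chi_{(a,b)}\|_{p'(\cdot)},
\]
and then to verify by a direct computation that for the power weight \eqref{eq:power} this condition is equivalent to \eqref{eq:KS}. For part~(b) the equivalence ``$S$ bounded on $L^{p(\cdot)}(\R,w)$ $\Longleftrightarrow$ $A_{p(\cdot)}(w)<\infty$'' is exactly the main result of the present paper, so (b) follows from it together with the computation. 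For part~(a) I would invoke the variable-exponent analogue of Muckenhoupt's theorem, ``$M$ bounded on $L^{p(\cdot)}(\R,w)$ $\Longleftrightarrow$ $A_{p(\cdot)}(w)<\infty$''; alternatively, under the extra hypothesis that $p$ is constant outside a bounded interval containing $x_1,\dots,x_m$, one can argue directly, splitting $Mf$ into its restrictions to small neighbourhoods of the $x_j$, to a neighbourhood of $\infty$ on which $p$ is constant, and to the complement of these, and using the boundedness of $M$ on unweighted $L^{p(\cdot)}(\R)$ from \cite[Theorem~4.3.8]{DHHR11}.

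The core of the argument is the evaluation of $A_{p(\cdot)}(w)$. Fix a small $\delta>0$. On an interval $(a,b)$ with $b-a\le\delta$ that contains exactly one node $x_j$ and stays away from the others, $w\asymp|\cdot-x_j|^{\lambda_j}$ there, and local log-H\"older continuity makes $p$ nearly constant equal to $p(x_j)$ on $(a,b)$; so the elementary bound $\int_a^b|x-x_j|^{\lambda_j p(x_j)}\,dx\asymp(b-a)^{\lambda_j p(x_j)+1}$, valid when $\lambda_j p(x_j)+1>0$ and with left side $+\infty$ otherwise, yields, uniformly in such $(a,b)$,
\[
\|w\chi_{(a,b)}\|_{p(\cdot)}\asymp(b-a)^{1/p(x_j)+\lambda_j},\qquad \|w^{-1}\chi_{(a,b)}\|_{p'(\cdot)}\asymp(b-a)^{1/p'(x_j)-\lambda_j},
\]
provided $0<1/p(x_j)+\lambda_j$ and $1/p(x_j)+\lambda_j<1$ respectively, one of the two norms being $+\infty$ as soon as one of these fails. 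Multiplying and dividing by $b-a$ makes the defining quantity comparable to $(b-a)^{1/p(x_j)+1/p'(x_j)-1}=1$. For the behaviour at infinity I would test on the expanding intervals $(-R,R)$, where $w\asymp|\cdot|^{\lambda_\infty+\sum_j\lambda_j}$ and $p$ is nearly constant equal to $p(\infty)$ outside a large ball; the analogous bookkeeping (with logarithmic corrections in the borderline cases) shows that $\frac{1}{2R}\|w\chi_{(-R,R)}\|_{p(\cdot)}\|w^{-1}\chi_{(-R,R)}\|_{p'(\cdot)}$ stays bounded as $R\to\infty$ precisely when $0<1/p(\infty)+\lambda_\infty+\sum_j\lambda_j<1$, and tends to $\infty$ otherwise. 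Intervals of intermediate length, or meeting several nodes, are subdivided into a bounded number of pieces of the two types just treated together with pieces on which $w$ is bounded above and below by positive constants, the last contributing a quantity comparable to $\frac{1}{b-a}\|\chi_{(a,b)}\|_{p(\cdot)}\|\chi_{(a,b)}\|_{p'(\cdot)}\asymp1$. Collecting the cases gives $A_{p(\cdot)}(w)<\infty$ if and only if \eqref{eq:KS} holds, which with the reductions of the first paragraph proves (a) and (b).

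I expect the main obstacle to be making the two-sided estimates of $\|w\chi_{(a,b)}\|_{p(\cdot)}$ uniform in the interval $(a,b)$: this is precisely where \emph{both} halves of global log-H\"older continuity are needed --- the local modulus of continuity to freeze $p$ near each $x_j$, and the decay condition to freeze $p$ near $\infty$ --- and where the borderline cases of \eqref{eq:KS} must be distinguished by more than a bare power-counting. A secondary technical nuisance, for part~(a), is the passage from unweighted to weighted boundedness of $M$ under the hypothesis that $p$ is eventually constant; once the reduction to $A_{p(\cdot)}(w)$ is available, reading off the explicit inequalities \eqref{eq:KS} is only exponent arithmetic.
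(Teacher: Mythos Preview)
The paper does not prove Theorem~\ref{th:KS} at all: it is stated as background, with the sentence ``From \cite[Theorem~A]{KPS06} and \cite[Theorem~B]{KS08} one can extract the following result'' preceding it. So there is no in-paper proof to compare to; the paper's ``proof'' is a citation.

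Your proposal is a genuinely different route: you derive Theorem~\ref{th:KS} as a corollary of the paper's own Theorems~\ref{th:CDH} and~\ref{th:main}, together with the computation that for the power weight \eqref{eq:power} the condition $w\in\cA_{p(\cdot)}(\R)$ is equivalent to \eqref{eq:KS}. That is a perfectly legitimate strategy, and your sketch of the computation (local freezing of $p$ near each $x_j$ via the log-H\"older condition, testing on $(-R,R)$ for the behaviour at infinity, subdivision of general intervals) is essentially the standard one and is correct in outline. Two remarks are worth making. First, logically your argument for part~(b) relies on Theorem~\ref{th:main}, which is proved \emph{after} Theorem~\ref{th:KS} is stated; there is no circularity, since the proof of Theorem~\ref{th:main} nowhere uses Theorem~\ref{th:KS}, but in the paper's narrative Theorem~\ref{th:KS} is motivation rather than consequence. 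Second, your approach to part~(a) via Theorem~\ref{th:CDH} does not use the extra hypothesis that $p$ is constant outside a bounded interval, so it actually yields a stronger statement than the one attributed to Kokilashvili--Samko; this is consistent with the historical order (Theorem~\ref{th:CDH} supersedes Theorem~\ref{th:KS}(a)).

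What your approach buys is a self-contained derivation inside the paper, at the cost of the explicit power-weight computation; what the paper's citation buys is brevity and correct attribution to the original authors, whose proofs in \cite{KPS06,KS08} predate Theorems~\ref{th:CDH} and~\ref{th:main} and do not go through the $\cA_{p(\cdot)}$ class.
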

Further, the sufficiency portion of this result was extended in \cite{KSS07-maximal,KSS07-singular}
to radial oscillating weights of the form $\prod_{j=1}^m\omega_j(|x-x_j|)$, where
$\omega_j(t)$ are continuous functions for $t>0$ that may oscillate near zero and whose
Matuszewska-Orlicz indices can be different. Notice that the Matuszewska-Orlicz
indices of $\omega_j(t)=t^{\lambda_j}$ are both equal to $\lambda_j$.

Very recently, Cruz-Uribe, Diening, and H\"ast\"o \cite[Theorem~1.3]{DHHR11}
generalized part (a) of Theorem~\ref{th:KS} to the case of general weights.
To formulate their result, we will introduce the following generalization of the classical
Muckenhoupt condition (written in the symmetric form). We say that a weight
$w:\R\to[0,\infty]$ belongs to the class $\cA_{p(\cdot)}(\R)$ if
\[
\sup_{-\infty<a<b<\infty}
\frac{1}{b-a}\|w\chi_{(a,b)}\|_{p(\cdot)}\|w^{-1}\chi_{(a,b)}\|_{p'(\cdot)}<\infty.
\]
This condition goes back to Berezhnoi \cite{B99} (in the more general setting
of Banach function spaces), it was studied by the first author \cite{K03}
(in the case of Banach function spaces defined on Carleson curves)
and Kopaliani \cite{K07}.
\begin{theorem}[Cruz-Uribe, Diening, H\"ast\"o]
\label{th:CDH}
Let $p\in\cP^{\log}(\R)$ satisfy \eqref{eq:exponents} and
$w:\R\to[0,\infty]$ be a weight. The Hardy-Littlewood maximal
operator $M$ is bounded on the weighted variable Lebesgue space
$L^{p(\cdot)}(\R,w)$ if and only if $w\in\cA_{p(\cdot)}(\R)$.
\end{theorem}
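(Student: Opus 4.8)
The plan is to prove the two implications separately, with the genuine work concentrated in the sufficiency of $w\in\cA_{p(\cdot)}(\R)$. Throughout I use two standard features of variable Lebesgue spaces under \eqref{eq:exponents}: H\"older's inequality $\int_\R|fg|\,dx\le 2\|f\|_{p(\cdot)}\|g\|_{p'(\cdot)}$ and the norm conjugate (associate-space) formula $\|g\|_{p'(\cdot)}\le 2\sup\{\int_\R|fg|\,dx:\|f\|_{p(\cdot)}\le1\}$, both available from \cite[Chap.~2]{DHHR11}. I also record that $\cA_{p(\cdot)}(\R)$ is symmetric under $(p,w)\mapsto(p',w^{-1})$, since the defining product $\|w\chi_{(a,b)}\|_{p(\cdot)}\|w^{-1}\chi_{(a,b)}\|_{p'(\cdot)}$ is invariant; together with the duality $[L^{p(\cdot)}(\R,w)]^*=L^{p'(\cdot)}(\R,w^{-1})$ noted in the introduction, this means the $\cA_{p(\cdot)}$ hypothesis applies verbatim to the associate space, a fact I shall exploit.

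For necessity, suppose $\|(Mf)w\|_{p(\cdot)}\le C\|fw\|_{p(\cdot)}$. Fix an interval $Q=(a,b)$ and a function $f$ supported in $Q$. Since $Mf(x)\ge|Q|^{-1}\int_Q|f|$ for every $x\in Q$, boundedness gives
\[
\frac{1}{|Q|}\Big(\int_Q|f|\,dy\Big)\|w\chi_Q\|_{p(\cdot)}\le C\|fw\|_{p(\cdot)}.
\]
Writing $f=hw^{-1}$ with $\operatorname{supp}h\subset Q$ turns the left-hand factor into $\int_Q|h|w^{-1}$ and the right-hand side into $C\|h\|_{p(\cdot)}$; taking the supremum over $h$ with $\|h\|_{p(\cdot)}\le1$ and invoking the norm conjugate formula replaces $\int_Q|h|w^{-1}$ by (half of) $\|w^{-1}\chi_Q\|_{p'(\cdot)}$. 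This yields $|Q|^{-1}\|w\chi_Q\|_{p(\cdot)}\|w^{-1}\chi_Q\|_{p'(\cdot)}\le 2C$ uniformly in $Q$, i.e.\ $w\in\cA_{p(\cdot)}(\R)$.

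For sufficiency I would first recast the $\cA_{p(\cdot)}$ condition as a uniform bound on the averaging operators $T_Qf:=|Q|^{-1}\big(\int_Q|f|\,dy\big)\chi_Q$. Indeed, H\"older's inequality gives
\[
\|T_Qf\|_{p(\cdot),w}=\frac{1}{|Q|}\Big(\int_Q|f|\,dy\Big)\|w\chi_Q\|_{p(\cdot)}\le\frac{2}{|Q|}\|w\chi_Q\|_{p(\cdot)}\|w^{-1}\chi_Q\|_{p'(\cdot)}\,\|f\|_{p(\cdot),w},
\]
so $w\in\cA_{p(\cdot)}(\R)$ is exactly the statement that $\sup_Q\|T_Q\|_{L^{p(\cdot)}(\R,w)\to L^{p(\cdot)}(\R,w)}<\infty$ (the reverse implication being the necessity computation above). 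The remaining, and decisive, step is the transference principle: uniform boundedness of the averaging operators forces boundedness of $M$ itself. Here the global log-H\"older hypothesis $p\in\cP^{\log}(\R)$ enters in an essential way through the ``key estimate'' of \cite[Chap.~4]{DHHR11}, which allows one to freeze the exponent on each interval, replacing $\big(|Q|^{-1}\int_Q|f|\big)^{p(x)}$ for $x\in Q$ by $|Q|^{-1}\int_Q|f|^{p(y)}\,dy$ up to controlled error terms with integrable decay. Feeding the local $\cA_{p(\cdot)}$ bound into this estimate produces local boundedness of $M$ on each interval, which one then patches together over a dyadic decomposition of $\R$.

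The hard part is precisely this transference, i.e.\ showing that the pointwise domination of $Mf$ by a sparse sum of averages can be summed inside the weighted modular. Two difficulties must be overcome. First, the weight $w$ is arbitrary, so $w^{p(\cdot)}$ carries no classical Muckenhoupt structure; the summation of the off-diagonal (tail) contributions of $M$ must therefore be driven entirely by the scale-uniform $\cA_{p(\cdot)}$ bound, which requires extracting from $\cA_{p(\cdot)}(\R)$ a quantitative $A_\infty$-type/reverse-H\"older self-improvement so that the sparse sum converges geometrically. Second, the exponent freezing and the weighted averaging must be carried out simultaneously and uniformly in $Q$, which is where the interplay of $\|w\chi_Q\|_{p(\cdot)}$ and $\|w^{-1}\chi_Q\|_{p'(\cdot)}$ is delicate. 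The symmetry noted above is what makes the scheme close: the averaging/testing condition holds simultaneously on $L^{p(\cdot)}(\R,w)$ and on its associate space $L^{p'(\cdot)}(\R,w^{-1})$, which is exactly the input needed for Diening-type criteria and lets the two sides reinforce each other in the summation.
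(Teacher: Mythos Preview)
The paper does not prove this theorem at all: it is quoted from \cite{CDH11} (see also \cite[Theorem~1.3]{DHHR11}) and used as a black box in the proofs of Theorems~\ref{th:main} and~\ref{th:nice-relations}. So there is no ``paper's own proof'' to compare against; the relevant question is simply whether your proposal constitutes a proof.

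Your necessity argument is correct and is the standard one. The sufficiency direction, however, is not a proof but a programme. You correctly observe that $w\in\cA_{p(\cdot)}(\R)$ is equivalent to the uniform boundedness of the averaging operators $T_Q$ on $L^{p(\cdot)}(\R,w)$, and you correctly identify the decisive step as passing from this to the boundedness of $M$. But from that point on the text only \emph{names} the obstacles (``$A_\infty$-type/reverse-H\"older self-improvement'', ``sparse sum converges geometrically'', ``Diening-type criteria'') without carrying any of them out. In particular: (i) you assert that a reverse-H\"older/self-improvement property can be extracted from the bare $\cA_{p(\cdot)}$ condition, but give no argument---and this is exactly the place where the original proof in \cite{CDH11} does substantial work, since $w^{p(\cdot)}$ need not be a classical Muckenhoupt weight; (ii) the ``key estimate'' of \cite[Chap.~4]{DHHR11} you invoke is tailored to the \emph{unweighted} setting and does not by itself absorb an arbitrary weight in the modular; (iii) the vague appeal to ``Diening-type criteria'' hides the nontrivial fact (proved in \cite{CDH11}) that the $\cA_{p(\cdot)}$ class is left-open, i.e.\ $w\in\cA_{p(\cdot)}$ implies $w\in\cA_{p(\cdot)/q}$ for some $q>1$, which is what actually drives the summation. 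As written, the sufficiency half is a description of the landscape rather than a traversal of it; to make it a proof you would need to supply, at minimum, the openness/self-improvement lemma and the weighted summation argument.
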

The aim of this paper is to generalize part (b) of Theorem~\ref{th:KS} to the
case of general weights. We will prove the following.
\begin{theorem}[Main result]
\label{th:main}
Let $p\in\cP^{\log}(\R)$ satisfy \eqref{eq:exponents} and
$w:\R\to[0,\infty]$ be a weight. The Cauchy singular integral
operator $S$ is bounded on the weighted variable Lebesgue space
$L^{p(\cdot)}(\R,w)$  if and only if $w\in\cA_{p(\cdot)}(\R)$.
\end{theorem}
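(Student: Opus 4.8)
The plan is to reduce the boundedness of $S$ on $L^{p(\cdot)}(\R,w)$ to the boundedness of the maximal operator $M$ on the same space, where Theorem~\ref{th:CDH} already characterizes the latter in terms of $w\in\cA_{p(\cdot)}(\R)$. The necessity direction is the easier half: if $S$ is bounded on $L^{p(\cdot)}(\R,w)$, then by a standard localization/commutator argument (or by testing $S$ on suitably translated and dilated characteristic functions $\chi_{(a,b)}$, using that $S\chi_{(a,b)}(x)=\frac{1}{\pi i}\log\bigl|\frac{x-a}{x-b}\bigr|$ is comparable to a nonzero constant on an interval of length comparable to $b-a$ adjacent to $(a,b)$) one extracts exactly the $\cA_{p(\cdot)}(\R)$ condition. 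Equivalently, since $[L^{p(\cdot)}(\R,w)]^*=L^{p'(\cdot)}(\R,w^{-1})$ under \eqref{eq:exponents}, boundedness of $S$ forces boundedness of $S^*=-S$ on the dual space, and pairing these two facts yields the symmetric Muckenhoupt-type bound on $\|w\chi_{(a,b)}\|_{p(\cdot)}\|w^{-1}\chi_{(a,b)}\|_{p'(\cdot)}/(b-a)$.

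For sufficiency, assume $w\in\cA_{p(\cdot)}(\R)$. The key structural fact is that $\cA_{p(\cdot)}(\R)$ is stable under a small perturbation of the exponent and under raising $w$ to a power slightly bigger than $1$: there exists $r>1$ (close to $1$) such that $w^r\in\cA_{p_r(\cdot)}(\R)$ for an exponent $p_r$ still satisfying \eqref{eq:exponents} and lying in $\cP^{\log}(\R)$ — this is the analogue of the classical "$\cA_p$ opens up" / reverse-Hölder phenomenon, and in the variable-exponent setting it follows from the self-improvement properties of $\cA_{p(\cdot)}$ weights established in the Cruz-Uribe–Diening–Hästö circle of ideas (Chapter~4 of \cite{DHHR11}). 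Combined with the extrapolation machinery for variable Lebesgue spaces, this lets me import the classical weighted norm inequality for $S$ on $L^r(\R,v)$ with $v\in A_r$ and transfer it to $L^{p(\cdot)}(\R,w)$. Concretely, one route is: use the Coifman–Fefferman–type good-$\lambda$ or the pointwise/sparse domination of $S$ by the maximal operator, namely $\|Sf\|_{L^1(v)}\lesssim \|Mf\|_{L^1(v)}$ for all $v\in A_\infty$, then feed this into the off-diagonal extrapolation theorem for $L^{p(\cdot)}$-spaces (Theorem~5.3.1-type results in \cite{DHHR11}) together with Theorem~\ref{th:CDH} to conclude $\|Sf\|_{p(\cdot),w}\lesssim\|Mf\|_{p(\cdot),w}\lesssim\|f\|_{p(\cdot),w}$.

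The first genuine obstacle is making sense of $S$ as an operator on $L^{p(\cdot)}(\R,w)$ in the first place — a priori $f\in L^{p(\cdot)}(\R,w)$ need not be in $L^1_{\rm loc}(\R)$ if $w$ is wild, so one must show that $w\in\cA_{p(\cdot)}(\R)$ already forces $L^{p(\cdot)}(\R,w)\hookrightarrow L^1_{\rm loc}(\R)$ (this uses $w\chi_{(a,b)}\in L^{p(\cdot)}$, $w^{-1}\chi_{(a,b)}\in L^{p'(\cdot)}$ and Hölder's inequality for variable exponents) and that the principal-value integral converges a.e.; the natural way is to first define $S$ on a dense subspace (e.g. $C_c^\infty$, or finite linear combinations of characteristic functions of intervals) and then extend by continuity once the a priori estimate is in hand. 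The second, more serious, obstacle is the self-improvement step: proving rigorously that $w\in\cA_{p(\cdot)}(\R)$ implies $w^r\in\cA_{p_r(\cdot)}(\R)$ for some $r>1$, with $p_r\in\cP^{\log}(\R)$ and $1<(p_r)_-\le(p_r)_+<\infty$. This is where the global log-Hölder hypothesis on $p$ is really used (it is what makes $\|w\chi_{(a,b)}\|_{p(\cdot)}$ behave, up to constants, like $|(a,b)|^{1/p(a)}$ times an average of $w$ on short intervals and interacts well with the maximal operator), and it is the technical heart of the argument; everything else — extrapolation, the domination of $S$ by $M$, duality — is essentially off-the-shelf once this is established.
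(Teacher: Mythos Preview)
Your necessity sketch is essentially the paper's: test $S$ on characteristic functions of half-intervals and use that $|S(\chi_{Q_1})|$ dominates a fixed multiple of $1$ on the adjacent half $Q_2$. (Minor slip: with the paper's normalization $S^*=S$, not $-S$, but this does not affect the argument.) The paper also needs a preliminary step showing that boundedness of $S$ forces $w\in X_{\rm loc}$ and $1/w\in X'_{\rm loc}$, obtained via the commutator $SV-VS$ with $V=$ multiplication by $x\chi_J$; you gloss over this, but it is routine.

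For sufficiency your route and the paper's diverge sharply. The paper does \emph{not} use self-improvement of $\cA_{p(\cdot)}$ weights, nor extrapolation, nor Coifman--Fefferman good-$\lambda$. Instead it argues entirely at the level of Banach function spaces: since $w\in\cA_{p(\cdot)}(\R)$ iff $w^{-1}\in\cA_{p'(\cdot)}(\R)$, Theorem~\ref{th:CDH} gives $M$ bounded on \emph{both} $L^{p(\cdot)}(\R,w)$ and its associate $L^{p'(\cdot)}(\R,w^{-1})$. For $f\in C_0^\infty$ and any $g$ in the associate space, Lerner's local sharp maximal inequality plus the \'Alvarez--P\'erez pointwise bound $(Sf)_\delta^{\#}\le C_\delta Mf$ yield
\[
\int_\R |Sf\cdot g|\,dx \le C\int_\R Mf\cdot Mg\,dx
\le C\|Mf\|_{p(\cdot),w}\|Mg\|_{p'(\cdot),w^{-1}}
\le C'\|f\|_{p(\cdot),w}\|g\|_{p'(\cdot),w^{-1}},
\]
and taking the supremum over $\|g\|\le 1$ together with density of $C_0^\infty$ finishes the proof. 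This is a two-line core once the ingredients are in place, and it is valid in any separable Banach function space where $M$ is bounded on both the space and its associate.

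The upshot is that the step you single out as the ``technical heart'' --- the $\cA_{p(\cdot)}$ self-improvement/reverse-H\"older statement --- is simply absent from the paper's proof and unnecessary here. Your extrapolation route could in principle be made to work (the needed variable-exponent extrapolation theorems exist), but as written it is incomplete precisely at the point you flag, whereas the paper's sharp-maximal-function approach sidesteps that obstacle entirely. What the paper's method buys is a clean statement at Banach-function-space generality with no weight-theoretic openness needed; what your route would buy, if completed, is a more modular proof that plugs directly into the classical $A_p$ theory.
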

From this theorem, by using standard techniques, we derive also the following.
\begin{theorem}
\label{th:nice-relations}
Let $p\in\cP^{\log}(\R)$ satisfy \eqref{eq:exponents} and $w\in\cA_{p(\cdot)}(\R)$.
Then $S^2=I$ on the space $L^{p(\cdot)}(\R,w)$ and $S^*=S$ on the space $L^{p'(\cdot)}(\R,w^{-1})$.
\end{theorem}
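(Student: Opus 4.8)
The plan is to derive both relations from Theorem~\ref{th:main} and the classical facts about $S$ on $L^2(\R)$ by a density argument. From the $L^2$-theory I use that $S$ is bounded on $L^2(\R)$ and satisfies $S^2=I$ and $S^*=S$; all three are classical and follow from the Fourier-multiplier representation of $S$ (its multiplier is $\pm\operatorname{sgn}(\cdot)$, which is real-valued and squares to $1$), equivalently from the fact that $P_\pm:=(I\pm S)/2$ are complementary projections. I would also note that the class $\cA_{p(\cdot)}(\R)$ is invariant under the substitution $(p,w)\mapsto(p',w^{-1})$: in its defining supremum the two norm factors merely change roles and $(p')'=p$. Hence $w^{-1}\in\cA_{p'(\cdot)}(\R)$, and Theorem~\ref{th:main} applies to it as well, giving that $S$ is bounded on $L^{p'(\cdot)}(\R,w^{-1})$.

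Next I would fix a dense set. Because $w\in\cA_{p(\cdot)}(\R)$ and $0<w<\infty$ a.e., both $\|w\chi_{(a,b)}\|_{p(\cdot)}$ and $\|w^{-1}\chi_{(a,b)}\|_{p'(\cdot)}$ are finite for every bounded interval $(a,b)$ (otherwise the defining supremum would be infinite); therefore $L^{p(\cdot)}(\R,w)$ is a Banach function space with $[L^{p(\cdot)}(\R,w)]^*=L^{p'(\cdot)}(\R,w^{-1})$, as recalled in the Introduction, and the set $\mathcal D$ of bounded, compactly supported functions lies in $L^2(\R)\cap L^{p(\cdot)}(\R,w)\cap L^{p'(\cdot)}(\R,w^{-1})$. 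Moreover, since $p_+<\infty$ the norm of $L^{p(\cdot)}(\R)$ is absolutely continuous, hence so is that of $L^{p(\cdot)}(\R,w)$ through the isometry $f\mapsto fw$; thus the truncations $f_n:=f\chi_{\{|x|\le n,\,|f(x)|\le n\}}$ converge to $f$ in $L^{p(\cdot)}(\R,w)$, so $\mathcal D$ is dense in $L^{p(\cdot)}(\R,w)$, and for the same reason in $L^{p'(\cdot)}(\R,w^{-1})$.

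Now for the two identities. For $g\in\mathcal D\subset L^2(\R)$ we have $Sg\in L^2(\R)$ and $S^2g=g$ a.e.; since $S$ is a single operator on $L^1_{\rm loc}(\R)$ defined by one and the same principal-value integral, this $Sg$ is exactly the one furnished by Theorem~\ref{th:main}, so $Sg,S^2g\in L^{p(\cdot)}(\R,w)$ and $S^2g=g$ there too. As $S^2$ is bounded on $L^{p(\cdot)}(\R,w)$ and agrees with $I$ on the dense set $\mathcal D$, we conclude $S^2=I$ on $L^{p(\cdot)}(\R,w)$. For the adjoint relation it suffices to prove the formal self-adjointness $\langle Sf,g\rangle=\langle f,Sg\rangle$ for all $f\in L^{p(\cdot)}(\R,w)$ and $g\in L^{p'(\cdot)}(\R,w^{-1})$, where $\langle\cdot,\cdot\rangle$ is the duality pairing (with the $1/(\pi i)$ normalization $S$ is formally self-adjoint for this pairing); indeed this identity says precisely that $S^*g=Sg$. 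For $f,g\in\mathcal D$ all functions in sight belong to $L^2(\R)$ and the pairing reduces to the $L^2$ inner product, where $S^*=S$ as noted; a double density argument --- approximate first $f$, then $g$, by elements of $\mathcal D$, using boundedness of $S$ on both spaces and continuity of the pairing --- extends the identity to arbitrary $f$ and $g$, completing the proof.

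I do not expect a real obstacle --- this is why the authors flag the statement as following ``by standard techniques.'' The one delicate point is consistency: one must check that the Cauchy operator provided by the $L^2$-theory and the one provided by Theorem~\ref{th:main} are restrictions of a single operator on $L^1_{\rm loc}(\R)$, so that the classical $L^2$ identities genuinely transfer to $L^{p(\cdot)}(\R,w)$, and that the truncating family $\mathcal D$ really lies in all three spaces involved --- which is where the (automatic) local integrability of $w$ and $w^{-1}$ is used.
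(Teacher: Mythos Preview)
Your proposal is correct and follows essentially the same route as the paper: invoke boundedness of $S$ on both $L^{p(\cdot)}(\R,w)$ and $L^{p'(\cdot)}(\R,w^{-1})$ (the paper obtains this via Theorems~\ref{th:CDH} and~\ref{th:sufficiency}, you via Theorem~\ref{th:main} together with the symmetry $w\in\cA_{p(\cdot)}(\R)\Leftrightarrow w^{-1}\in\cA_{p'(\cdot)}(\R)$), use the classical $L^2$ identities $S^2=I$ and $\langle Sf,g\rangle=\langle f,Sg\rangle$ on the dense subset $L_0^\infty(\R)$, and extend by continuity. The only cosmetic difference is that the paper carries out the density argument once and for all in the abstract setting of Theorem~\ref{th:sufficiency} and then simply specializes, whereas you redo it directly for $L^{p(\cdot)}(\R,w)$; the ingredients and the logic are the same.
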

The paper is organized as follows. In Section~\ref{sec:wbfs} we collect
necessary facts on Banach function spaces $X(\R)$ in the sense of Luxemburg
and discuss weighted Banach functions spaces $X(\R,w)=\{f:fw\in X(\R)\}$.
A special attention is paid to conditions implying that $X(\R,w)$
is a Banach function space itself, to separability and reflexivity of $X(\R,w)$,
and to density of smooth compactly supported functions in $X(\R,w)$ and in its
dual space $X'(\R,w^{-1})$. In Section~\ref{subsec:sharp} we prepare the proof
of a sufficient condition for the boundedness of the operator $S$ and formulate
two key estimates by Lerner \cite{L04} and \'Alvarez and P\'erez \cite{AP94}.
On the basis of these results, in Section~\ref{subsec:sufficiency} we prove that
if $X(\R)$ is a separable Banach function space and the Hardy-Littlewood
maximal function is bounded on the weighted Banach function spaces
$X(\R,w)$ and $X'(\R,w^{-1})$, then $S$ is bounded on $X(\R,w)$ and $S^2=I$.
Moreover, if $X(\R)$ is reflexive, then $S^*$ coincides with $S$ on $X'(\R,w^{-1})$.
In Section~\ref{subsec:necessity} we prove that if $S$ is bounded on
the weighted Banach function spaces $X(\R,w)$, then
\[
\sup_{-\infty<a<b<\infty}
\frac{1}{b-a}\|w\chi_{(a,b)}\|_{X(\R)}\|w^{-1}\chi_{(a,b)}\|_{X'(\R)}<\infty
\]
where $X'(\R)$ is the associate space for $X(\R)$. Finally, in Section~\ref{subsec:WVLS}
we explain that Theorems~\ref{th:main} and \ref{th:nice-relations} follow from
results of Sections~\ref{subsec:sufficiency}--\ref{subsec:necessity} and
Theorem~\ref{th:CDH} because $L^{p(\cdot)}(\R)$ is a Banach function space,
which is separable and reflexive whenever $p$ satisfies \eqref{eq:exponents}.
\section{Weighted Banach function spaces}\label{sec:wbfs}
\subsection{Banach function spaces}
The set of all Lebesgue measurable complex-valued functions on $\R$ is
denoted by $\cM$. Let $\cM^+$ be the subset of functions in $\cM$ whose
values lie  in $[0,\infty]$. The characteristic function of a measurable
set $E\subset\R$ is denoted by $\chi_E$ and the Lebesgue measure of $E$
is denoted by $|E|$.
\begin{definition}[{\cite[Chap.~1, Definition~1.1]{BS88}}]
\label{def-BFS}
A mapping $\rho:\cM^+\to [0,\infty]$ is
called a {\it Banach function norm} if, for all functions $f,g,
f_n \ (n\in\N)$ in $\cM^+$, for all constants $a\ge 0$, and for
all measurable subsets $E$ of $\R$, the following properties
hold:
\begin{eqnarray*}
{\rm (A1)} & & \rho(f)=0  \Leftrightarrow  f=0\ \mbox{a.e.}, \quad
\rho(af)=a\rho(f), \quad
\rho(f+g) \le \rho(f)+\rho(g),\\
{\rm (A2)} & &0\le g \le f \ \mbox{a.e.} \ \Rightarrow \ \rho(g)
\le \rho(f)
\quad\mbox{(the lattice property)},\\
{\rm (A3)} & &0\le f_n \uparrow f \ \mbox{a.e.} \ \Rightarrow \
       \rho(f_n) \uparrow \rho(f)\quad\mbox{(the Fatou property)},\\
{\rm (A4)} & & |E|<\infty \Rightarrow \rho(\chi_E) <\infty,\\
{\rm (A5)} & & |E|<\infty \Rightarrow \int_E f(x)\,dx \le C_E\rho(f)
\end{eqnarray*}
with $C_E \in (0,\infty)$ which may depend on $E$ and $\rho$ but is
independent of $f$.
\end{definition}
When functions differing only on a set of measure zero are identified,
the set $X(\R)$ of all functions $f\in\cM$ for which $\rho(|f|)<\infty$ is
called a \textit{Banach function space}. For each $f\in X(\R)$, the norm of
$f$ is defined by
\[
\|f\|_{X(\R)} :=\rho(|f|).
\]
The set $X(\R)$ under the natural linear space operations and under this norm
becomes a Banach space (see \cite[Chap.~1, Theorems~1.4 and~1.6]{BS88}).

If $\rho$ is a Banach function norm, its associate norm $\rho'$ is
defined on $\cM^+$ by
\[
\rho'(g):=\sup\left\{
\int_\R f(x)g(x)\,dx \ : \ f\in \cM^+, \ \rho(f) \le 1
\right\}, \quad g\in \cM^+.
\]
It is a Banach function norm itself \cite[Chap.~1, Theorem~2.2]{BS88}.
The Banach function space $X'(\R)$ determined by the Banach function norm
$\rho'$ is called the \textit{associate space} (\textit{K\"othe dual}) of $X(\R)$.
The associate space $X'(\R)$ is a subspace of the dual space $[X(\R)]^*$.
The construction of the associate space implies the following
H\"older inequality for Banach function spaces.
\begin{lemma}[{\cite[Chap.~1, Theorem~2.4]{BS88}}]
\label{le:Hoelder}
Let $X(\R)$ be a Banach function  space and $X'(\R)$ be its associate space.
If $f\in X(\R)$ and $g\in X'(\R)$, then $fg$ is integrable and
\[
\|fg\|_{L^1(\R)}\le \|f\|_{X(\R)}\|g\|_{X'(\R)}.
\]
\end{lemma}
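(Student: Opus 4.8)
The plan is to derive the Hölder inequality directly from the definition of the associate norm, handling the measure-theoretic subtleties with a standard truncation-and-monotone-convergence argument. First I would reduce to the case $f, g \geq 0$: since $|fg| = |f|\,|g|$ and $\||f|\|_{X(\R)} = \|f\|_{X(\R)}$, $\||g|\|_{X'(\R)} = \|g\|_{X'(\R)}$, it suffices to prove $\int_\R f g\,dx \le \|f\|_{X(\R)}\|g\|_{X'(\R)}$ for $f \in \cM^+ \cap X(\R)$ and $g \in \cM^+ \cap X'(\R)$. If either norm is zero, the corresponding function vanishes a.e.\ by (A1) (applied to $\rho$ or $\rho'$), so the inequality is trivial; hence assume both norms are positive and, after normalizing by scalars using the homogeneity in (A1), assume $\|f\|_{X(\R)} = 1$, so $\rho(f) \le 1$.

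Next I would like to conclude from $\rho(f) \le 1$ and the definition
\[
\rho'(g) = \sup\left\{ \int_\R h(x) g(x)\,dx : h \in \cM^+, \ \rho(h) \le 1 \right\}
\]
that $\int_\R f g\,dx \le \rho'(g) = \|g\|_{X'(\R)}$. This is immediate by taking $h = f$ in the supremum, \emph{provided} the integral $\int_\R f g\,dx$ is well-defined (possibly $+\infty$ a priori) and the supremum is genuinely an upper bound for it. The only subtlety is that one wants to know $fg \in L^1(\R)$, i.e.\ that the integral is finite; this follows once the inequality $\int_\R fg\,dx \le \rho'(g) < \infty$ is established, since $g \in X'(\R)$ means exactly $\rho'(g) < \infty$. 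So the logical order is: first observe $\int_\R fg\,dx \le \rho'(g)$ directly from the definition of $\rho'$ with the admissible choice $h=f$, then read off both integrability of $fg$ and the desired norm bound.

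The one technical point that deserves care — and the only place where the argument is not a one-line quotation of the definition — is justifying that $\int_\R fg\,dx$ makes sense as an element of $[0,\infty]$ and that plugging $h=f$ into the supremum is legitimate even before knowing finiteness. Here $fg \in \cM^+$, so $\int_\R fg\,dx$ is always defined in $[0,\infty]$ by the Lebesgue theory, and the supremum defining $\rho'(g)$ is over all $h \in \cM^+$ with $\rho(h) \le 1$, which includes $h=f$; thus $\int_\R fg\,dx \le \rho'(g)$ with no finiteness assumed in advance. Finiteness of the right-hand side then gives $fg \in L^1(\R)$. Undoing the normalization and reduction to absolute values yields $\|fg\|_{L^1(\R)} = \int_\R |f|\,|g|\,dx \le \|f\|_{X(\R)}\|g\|_{X'(\R)}$ in general. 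Since this is exactly the content of \cite[Chap.~1, Theorem~2.4]{BS88}, I would simply cite that reference for the full details rather than reproduce them, as the paper does.
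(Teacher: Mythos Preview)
Your argument is correct and is exactly the standard proof from \cite[Chap.~1, Theorem~2.4]{BS88}; the paper itself does not reproduce any proof and simply cites that reference, which is precisely what you propose doing in the end.
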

The next result provides a useful converse to the integrability assertion
of Lemma~\ref{le:Hoelder}.
\begin{lemma}[{\cite[Chap.~1, Lemma~2.6]{BS88}}]
\label{le:Landau}
Let $X(\R)$ be a Banach function space. In order that a measurable function
$g$ belong to the associate space $X'(\R)$, it is necessary and sufficient
that $fg$ be integrable for every $f$ in $X(\R)$.
\end{lemma}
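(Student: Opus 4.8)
The plan is to establish the two implications separately, with all the substance residing in the sufficiency. For the (easy) necessity direction, suppose $g\in X'(\R)$; then for every $f\in X(\R)$ the H\"older inequality for Banach function spaces (Lemma~\ref{le:Hoelder}) gives that $fg$ is integrable, with $\|fg\|_{L^1(\R)}\le\|f\|_{X(\R)}\|g\|_{X'(\R)}<\infty$. So this direction needs nothing beyond Lemma~\ref{le:Hoelder}.

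For the sufficiency I would argue by contraposition: assuming $g\notin X'(\R)$, that is $\rho'(|g|)=\infty$, I would exhibit a single $f\in X(\R)$ for which $fg\notin L^1(\R)$. Since $\rho'(|g|)$ is by definition the supremum of $\int_\R h|g|\,dx$ over all $h\in\cM^+$ with $\rho(h)\le 1$, for each $n\in\N$ one may choose such an $h_n$ with $\int_\R h_n(x)|g(x)|\,dx\ge 4^n$. Set $f:=\sum_{n=1}^\infty 2^{-n}h_n\in\cM^+$. The partial sums $f_N:=\sum_{n=1}^N 2^{-n}h_n$ increase pointwise to $f$, and subadditivity and homogeneity of $\rho$ (property (A1)) give $\rho(f_N)\le\sum_{n=1}^N 2^{-n}\rho(h_n)\le 1$; hence the Fatou property (A3) yields $\rho(f)=\lim_{N\to\infty}\rho(f_N)\le 1$, so $f\in X(\R)$, and by (A5) $f$ is locally integrable and thus finite a.e. On the other hand $f\ge 2^{-n}h_n$ for every $n$, so $\int_\R f|g|\,dx\ge 2^{-n}\cdot 4^n=2^n$ for all $n$, forcing $\int_\R f|g|\,dx=\infty$. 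Since $f\ge 0$, this says $fg\notin L^1(\R)$, contradicting the hypothesis; therefore $g\in X'(\R)$.

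I expect the only point requiring care --- the natural, if modest, ``main obstacle'' --- to be the verification that the constructed function $f$ genuinely lies in $X(\R)$. This is exactly where the Fatou property (A3) enters, applied to the increasing sequence of partial sums $f_N$ to transfer the uniform bound $\rho(f_N)\le 1$ to $\rho(f)\le 1$; the companion remark that an element of $X(\R)$ is finite a.e. uses axiom (A5). Everything else --- the choice of the functions $h_n$ and the divergence estimate for $\int_\R f|g|\,dx$ --- is routine.
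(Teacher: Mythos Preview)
The paper does not prove this lemma at all; it is stated with a citation to \cite[Chap.~1, Lemma~2.6]{BS88} and no proof follows. Your argument is correct and is essentially the standard proof one finds in Bennett--Sharpley: necessity via H\"older's inequality (Lemma~\ref{le:Hoelder}), and sufficiency by constructing, using the Fatou property (A3), a single $f=\sum_{n\ge 1}2^{-n}h_n$ in the unit ball of $X(\R)$ from functions $h_n$ with $\rho(h_n)\le 1$ and $\int_\R h_n|g|\,dx$ growing fast enough to force $\int_\R f|g|\,dx=\infty$.
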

\subsection{Weighted Banach function spaces}
Let $X(\R)$ be a Banach function space generated by a Banach function norm $\rho$.
We say that $f\in X_{\rm loc}(\R)$ if $f\chi_E\in X(\R)$ for any measurable
set $E\subset\R$ of finite measure.
A measurable function $w:\R\to[0,\infty]$ is referred to as a \textit{weight}
if $0<w(x)<\infty$ a.e. on $\R$. Define the mapping
$\rho_w:\cM^+\to[0,\infty]$ and the set $X(\R,w)$ by
\[
\rho_w(f):=\rho(fw)
\quad (f\in\cM^+),
\quad\quad
X(\R,w):=\big\{f\in\cM^+:\ fw\in X(\R)\big\}.
\]
\begin{lemma}\label{le:WBFS}
Let $X(\R)$ be a Banach function space generated by a Banach function norm $\rho$,
let $X'(\R)$ be its associate space, and let $w:\R\to[0,\infty]$ be a weight.
\begin{enumerate}
\item[{\rm(a)}]
The mapping $\rho_w$ satisfies Axioms {\rm(A1)-(A3)} in Definition~{\rm\ref{def-BFS}}
and $X(\R,w)$ is a linear normed space with respect to the norm
\[
\|f\|_{X(\R,w)}:=\rho_w(|f|)=\rho(|fw|)=\|fw\|_{X(\R)}.
\]

\item[{\rm (b)}]
If $w\in X_{\rm loc}(\R)$ and $1/w\in X_{\rm loc}'(\R)$, then $\rho_w$
is a Banach function norm and $X(\R,w)$ is a Banach function space generated
by $\rho_w$.

\item[{\rm (c)}]
If $w\in X_{\rm loc}(\R)$ and $1/w\in X_{\rm loc}'(\R)$, then
$X'(\R,w^{-1})$ is the associate space for the Banach function space
$X(\R,w)$.
\end{enumerate}
\end{lemma}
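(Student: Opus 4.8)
The plan is to verify the three parts by direct computation, exploiting that $0<w(x)<\infty$ a.e., so that multiplication by $w$ is an a.e.\ bijection of $\cM^+$ onto itself which respects a.e.\ equality, monotone limits, and nonnegative scalings.

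For part (a) I would check Axioms (A1)--(A3) for $\rho_w$ one at a time. Property (A1) holds because $fw=0$ a.e.\ if and only if $f=0$ a.e.\ (as $w>0$ a.e.), together with $\rho_w(af)=\rho(afw)=a\rho(fw)=a\rho_w(f)$ and $\rho_w(f+g)=\rho(fw+gw)\le\rho(fw)+\rho(gw)=\rho_w(f)+\rho_w(g)$. For (A2), if $0\le g\le f$ a.e.\ then $0\le gw\le fw$ a.e.\ (here $w\ge 0$ suffices), so the lattice property of $\rho$ gives $\rho_w(g)\le\rho_w(f)$; and (A3) follows in the same way from $0\le f_nw\uparrow fw$ a.e.\ and the Fatou property of $\rho$. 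The sublinearity and positive homogeneity just established show that $X(\R,w)$ is a linear space and that $\|\cdot\|_{X(\R,w)}=\rho_w(|\cdot|)$ is a norm on it.

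For part (b) it remains to verify (A4) and (A5), and this is exactly where the hypotheses $w\in X_{\rm loc}(\R)$ and $1/w\in X_{\rm loc}'(\R)$ enter. For a set $E$ with $|E|<\infty$, (A4) is immediate: $\rho_w(\chi_E)=\|w\chi_E\|_{X(\R)}<\infty$ since $w\in X_{\rm loc}(\R)$. For (A5), I would write, for $f\in\cM^+$ with $\rho_w(f)<\infty$,
\[
\int_E f(x)\,dx=\int_\R (fw)(x)\,(\chi_E/w)(x)\,dx\le\|fw\|_{X(\R)}\,\|\chi_E/w\|_{X'(\R)}=\rho_w(f)\,\|\chi_E/w\|_{X'(\R)},
\]
using the Hölder inequality of Lemma~\ref{le:Hoelder} (legitimate since $fw\in X(\R)$ and, by $1/w\in X_{\rm loc}'(\R)$, $\chi_E/w\in X'(\R)$). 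The constant $C_E:=\|\chi_E/w\|_{X'(\R)}$ is then finite, and positive when $|E|>0$; the cases $|E|=0$ and $\rho_w(f)=\infty$ are trivial. Thus $\rho_w$ is a Banach function norm, and $X(\R,w)$ is the Banach function space it generates, by \cite[Chap.~1, Theorems~1.4 and~1.6]{BS88}; in particular $X(\R,w)$ is complete.

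For part (c) I would compute the associate norm of $\rho_w$ directly. Given $g\in\cM^+$, substitute $h=fw$: since $w>0$ a.e., the map $f\mapsto fw$ is a bijection of $\{f\in\cM^+:\rho_w(f)\le 1\}$ onto $\{h\in\cM^+:\rho(h)\le 1\}$ with inverse $h\mapsto h/w$, and $\int_\R fg\,dx=\int_\R h\,(g/w)\,dx$. Hence
\[
(\rho_w)'(g)=\sup_{\rho_w(f)\le 1}\int_\R f(x)g(x)\,dx=\sup_{\rho(h)\le 1}\int_\R h(x)(g/w)(x)\,dx=\rho'(g/w),
\]
which is precisely the Banach function norm generating $X'(\R,w^{-1})$; note that $X'(\R,w^{-1})$ is itself a Banach function space by part (b) applied to $X'(\R)$ with the weight $w^{-1}$, since $w^{-1}\in X_{\rm loc}'(\R)$ and $w\in X_{\rm loc}(\R)=(X')_{\rm loc}'(\R)$ (Lorentz--Luxemburg). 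Therefore the associate space of $X(\R,w)$ is $X'(\R,w^{-1})$ with equality of norms. The whole argument is a sequence of routine verifications; the only points demanding care are the finiteness of the constant $C_E$ in (A5), which is where the two localization hypotheses are genuinely used, and the change of variables $h=fw$ in part (c), whose validity rests on $0<w<\infty$ a.e. I do not expect any deeper obstacle.
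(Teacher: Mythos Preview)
Your proof is correct and follows essentially the same route as the paper's: part~(a) by direct verification from Axioms~(A1)--(A3) for $\rho$ and $0<w<\infty$ a.e., part~(b) by reading off (A4) from $w\in X_{\rm loc}(\R)$ and deriving (A5) via H\"older's inequality with $\chi_E/w\in X'(\R)$, and part~(c) by the substitution $h=fw$ in the definition of the associate norm. The only cosmetic difference is that the paper inserts the extra factor $\chi_E$ and uses $\rho(fw\chi_E)\le\rho(fw)$ before applying H\"older, whereas you apply H\"older directly to $fw$ and $\chi_E/w$; both are equally valid.
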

\begin{proof}
The proof is analogous to that one of \cite[Lemma~2.5]{K03}.

Part (a) follows from Axioms (A1)-(A3) for the Banach function norm $\rho$
and the fact that $0<w(x)<\infty$ almost everywhere on $\R$.

(b) If $w\in X_{\rm loc}(\R)$, then $w\chi_E\in X(\R)$ for every
measurable set $E\subset\R$ of finite measure. Therefore $\rho_w(\chi_E)=\rho(w\chi_E)<\infty$.
Then $\rho_w$ satisfies Axiom (A4).

Since $1/w\in X_{\rm loc}'(\R)$, we have $C_E:=\rho'(\chi_E/w)<\infty$ for
every measurable set $E\subset\R$ of finite measure. On the other hand, by
Axiom (A2), for $f\in\cM^+$ we have $\rho(fw\chi_E)\le\rho(fw)=\rho_w(f)$.
By H\"older's inequality for $\rho$ (Lemma~\ref{le:Hoelder}), we obtain
\[
\int_E f(x)\,dx=\int_\R f(x)w(x)\chi_E(x) \cdot \frac{\chi_E(x)}{w(x)}\,dx
\le\rho(fw\chi_E)\rho'(\chi_E/w)\le C_E\rho_w(f).
\]
Thus $\rho_w$ satisfies Axiom (A5), that is, $X(\R,w)$ is a Banach
function space. Part (b) is proved.

(c) For $g\in\cM^+$, we have
\[
\begin{split}
(\rho_w)'(g) &=\sup\left\{\int_\R f(x)g(x)\,dx:\ f\in\cM^+,\ \rho_w(f)\le 1\right\}
\\
&=
\sup\left\{\int_\R\big(f(x)w(x)\big)\left(\frac{g(x)}{w(x)}\right)dx: \ f\in\cM^+,\ \rho(fw)\le 1\right\}
\\
&=
\sup\left\{\int_\R h(x)\left(\frac{g(x)}{w(x)}\right)dx: \ h\in\cM^+,\ \rho(h)\le 1\right\}
\\
&=\rho'(g/w).
\end{split}
\]
Hence $(X(\R,w))'=X'(\R,w^{-1})$.
\end{proof}
From Lemma~\ref{le:WBFS} and the Lorentz-Luxemburg theorem (see e.g.
\cite[Chap.~1, Theorem~2.7]{BS88}) we obtain the following.
\begin{lemma}
Let $X(\R)$ be a Banach function space and $w:\R\to[0,\infty]$ be a weight
such that $w\in X_{\rm loc}(\R)$ and $1/w\in X_{\rm loc}'(\R)$. Then
\begin{equation}\label{eq:corollary-LS-1}
\|f\|_{X(\R,w)}
=
\sup\left\{
\int_\R |f(x)g(x)|\,dx \ : \ g\in X'(\R,w^{-1}), \ \|g\|_{X'(\R,w^{-1})} \le 1
\right\}
\end{equation}
for all $f\in X(\R,w)$ and
\begin{equation}\label{eq:corollary-LS-2}
\|g\|_{X'(\R,w^{-1})}
=
\sup\left\{
\int_\R |f(x)g(x)|\,dx \ : \ f\in X(\R,w), \ \|f\|_{X(\R,w)} \le 1
\right\}
\end{equation}
for all $g\in X'(\R,w^{-1})$.
\end{lemma}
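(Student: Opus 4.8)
The plan is to deduce both identities directly from Lemma~\ref{le:WBFS} together with the classical Lorentz--Luxemburg theorem, which asserts that for every Banach function space $Y(\R)$, with associate space $Y'(\R)$ and second associate space $Y''(\R):=(Y'(\R))'$, one has $Y(\R)=Y''(\R)$ with equality of norms. Unwinding the definition of the associate norm $\rho'$, this is exactly the statement that
\[
\|f\|_{Y(\R)}=\sup\left\{\int_\R|f(x)h(x)|\,dx:\ h\in Y'(\R),\ \|h\|_{Y'(\R)}\le 1\right\}
\]
for every $f\in Y(\R)$. Now the hypotheses $w\in X_{\rm loc}(\R)$ and $1/w\in X_{\rm loc}'(\R)$ are precisely those required in parts (b) and (c) of Lemma~\ref{le:WBFS}, so under them $X(\R,w)$ is a Banach function space whose associate space is $X'(\R,w^{-1})$. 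Applying the displayed norm-duality formula with $Y(\R)=X(\R,w)$ and $Y'(\R)=X'(\R,w^{-1})$ yields \eqref{eq:corollary-LS-1} at once.

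For \eqref{eq:corollary-LS-2} I would argue symmetrically. Since the associate space of a Banach function space is again a Banach function space (\cite[Chap.~1, Theorem~2.2]{BS88}), $X'(\R,w^{-1})$ is itself a Banach function space, and by the Lorentz--Luxemburg theorem its associate space equals $(X(\R,w))''=X(\R,w)$. Applying the displayed norm-duality formula with $Y(\R)=X'(\R,w^{-1})$ and $Y'(\R)=X(\R,w)$ then gives \eqref{eq:corollary-LS-2}. (Equivalently, one may observe that $X'(\R,w^{-1})$ is the weighted Banach function space built from $X'(\R)$ and the weight $1/w$, and that the conditions $1/w\in X_{\rm loc}'(\R)$ and $w\in X_{\rm loc}(\R)=X_{\rm loc}''(\R)$ allow one to invoke Lemma~\ref{le:WBFS} with the roles of $X$ and $X'$, and of $w$ and $1/w$, interchanged.)

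There is essentially no obstacle here: the content has already been isolated in Lemma~\ref{le:WBFS} and in the Lorentz--Luxemburg theorem, and only two routine bookkeeping points remain. First, one should note that the supremum of $\int_\R|f(x)g(x)|\,dx$ over $g$ in the unit ball of the associate space coincides with the supremum over $h\in\cM^+$ with associate norm at most $1$ occurring in the definition of the second associate norm; this is immediate because the integrand depends only on $|f|$ and $|g|$ and because passing to absolute values does not increase the norm. Second, one must simply match the hypotheses on $w$ to those needed to apply Lemma~\ref{le:WBFS}(b),(c), which is verbatim.
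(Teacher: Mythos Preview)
Your proposal is correct and matches the paper's approach exactly: the paper simply states that the lemma follows from Lemma~\ref{le:WBFS} and the Lorentz--Luxemburg theorem \cite[Chap.~1, Theorem~2.7]{BS88}, and you have faithfully unpacked precisely that deduction.
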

\subsection{Reflexivity of weighted Banach function spaces}
A function $f$ in a Banach function space $X(\R)$ is said to have
\textit{absolutely continuous norm} in $X(\R)$ if $\|f\chi_{E_n}\|_{X(\R)}\to 0$
for every sequence $\{E_n\}_{n=1}^\infty$ of measurable sets on $\R$
satisfying $\chi_{E_n}\to 0$ a.e. on $\R$ as $n\to\infty$. If all
functions $f\in X(\R)$ have this property, then the space $X(\R)$ itself
is said to have \textit{absolutely continuous norm} (see
\cite[Chap.~1, Section~3]{BS88}).
\begin{lemma}[{\cite[Chap.~1, Lemma~3.4]{BS88}}]
\label{le:AC-property}
Let $X(\R)$ be a Banach function space. If $f\in X(\R)$ has absolutely continuous
norm, then to each $\eps>0$ there corresponds $\delta>0$ such that $|E|<\delta$
implies $\|f\chi_E\|_{X(\R)}<\eps$.
\end{lemma}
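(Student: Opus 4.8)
The plan is to argue by contradiction, turning a hypothetical failure of uniform smallness into a decreasing sequence of sets that collapses to a null set, at which point the absolute continuity hypothesis on $f$ can be applied.

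Suppose the conclusion fails. Then there is an $\eps_0>0$ such that for every $\delta>0$ one can find a measurable set $E\subset\R$ with $|E|<\delta$ but $\|f\chi_E\|_{X(\R)}\ge\eps_0$. Choosing $\delta=2^{-n}$ for each $n\in\N$ produces measurable sets $E_n\subset\R$ with
\[
|E_n|<2^{-n},
\qquad
\|f\chi_{E_n}\|_{X(\R)}\ge\eps_0 .
\]

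The next step is to ``tail-enlarge'' this sequence. Put $F_n:=\bigcup_{k=n}^{\infty}E_k$. Then $|F_n|\le\sum_{k=n}^{\infty}2^{-k}=2^{1-n}$, the sequence $\{F_n\}_{n=1}^{\infty}$ is decreasing, and $\bigl|\bigcap_{n=1}^{\infty}F_n\bigr|=0$; hence $\chi_{F_n}\to 0$ a.e. on $\R$ as $n\to\infty$. Since $f$ has absolutely continuous norm in $X(\R)$, this forces $\|f\chi_{F_n}\|_{X(\R)}\to 0$. But $E_n\subset F_n$, so $0\le|f|\chi_{E_n}\le|f|\chi_{F_n}$ a.e., and the lattice property (Axiom~(A2) of the Banach function norm) gives $\|f\chi_{E_n}\|_{X(\R)}\le\|f\chi_{F_n}\|_{X(\R)}\to 0$, contradicting $\|f\chi_{E_n}\|_{X(\R)}\ge\eps_0>0$. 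This contradiction proves the lemma.

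The only point that needs any care is the passage to a.e.\ convergence: the bound $|E_n|<2^{-n}$ (any summable bound would serve) is precisely what makes the tails $\{F_n\}$ shrink to a set of measure zero, so that the definition of absolutely continuous norm applies to the sequence $\{F_n\}$. Once that is in place, the argument reduces entirely to monotonicity of the norm, and I do not expect any genuine obstacle.
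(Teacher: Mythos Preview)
Your argument is correct: the Borel--Cantelli-type tail construction $F_n=\bigcup_{k\ge n}E_k$ is exactly what is needed to convert the failure of uniform smallness into a sequence with $\chi_{F_n}\to 0$ a.e., after which the definition of absolutely continuous norm and the lattice property finish the job. The paper itself does not supply a proof of this lemma but merely cites \cite[Chap.~1, Lemma~3.4]{BS88}; your proof is essentially the standard argument given there.
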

\begin{lemma}\label{le:AC}
Let $X(\R)$ be a Banach function space and $w:\R\to[0,\infty]$ be a weight
such that $w\in X_{\rm loc}(\R)$ and $1/w\in X_{\rm loc}'(\R)$.
If $X(\R)$ has absolutely continuous norm, then $X(\R,w)$ has absolutely
continuous norm too.
\end{lemma}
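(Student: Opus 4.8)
The plan is to reduce the absolute continuity of the norm in $X(\R,w)$ to the same property for $X(\R)$ by passing through the weight. Let $f\in X(\R,w)$; then by definition $fw\in X(\R)$. Since $X(\R)$ has absolutely continuous norm, the function $fw$ has absolutely continuous norm in $X(\R)$. Now take any sequence $\{E_n\}_{n=1}^\infty$ of measurable sets with $\chi_{E_n}\to 0$ a.e. on $\R$. Then $\|f\chi_{E_n}\|_{X(\R,w)}=\|(fw)\chi_{E_n}\|_{X(\R)}$ by part (a) of Lemma~\ref{le:WBFS}, and the right-hand side tends to $0$ because $fw$ has absolutely continuous norm in $X(\R)$ and $\chi_{E_n}\to 0$ a.e. Hence every $f\in X(\R,w)$ has absolutely continuous norm, which is exactly the statement that $X(\R,w)$ has absolutely continuous norm.

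The only genuine point requiring care is that $X(\R,w)$ is indeed a Banach function space, so that the notion of absolutely continuous norm is meaningful for it; this is supplied by part (b) of Lemma~\ref{le:WBFS} under the standing hypotheses $w\in X_{\rm loc}(\R)$ and $1/w\in X_{\rm loc}'(\R)$. Beyond that, the argument is essentially a tautology: the map $f\mapsto fw$ is a norm-preserving bijection between $X(\R,w)$ and $X(\R)$ that commutes with multiplication by characteristic functions, so it transports the absolute continuity property verbatim. I do not anticipate any real obstacle; the whole content is the observation that truncation by $\chi_{E_n}$ and multiplication by $w$ commute, together with the identity $\|f\chi_{E_n}\|_{X(\R,w)}=\|(fw)\chi_{E_n}\|_{X(\R)}$.

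If one wished to make the statement slightly more self-contained, one could also remark that, via Lemma~\ref{le:AC-property}, absolute continuity of the norm in $X(\R,w)$ is equivalent to the uniform-smallness statement ``for every $f$ and every $\eps>0$ there is $\delta>0$ with $|E|<\delta\Rightarrow\|f\chi_E\|_{X(\R,w)}<\eps$'', and this too transfers immediately from $X(\R)$ to $X(\R,w)$ by the same identity. But the direct sequential argument above is the shortest route and is the one I would write out.
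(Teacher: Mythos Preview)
Your argument is correct and is essentially identical to the paper's own proof: invoke Lemma~\ref{le:WBFS}(b) to ensure $X(\R,w)$ is a Banach function space, then use $\|f\chi_{E_n}\|_{X(\R,w)}=\|(fw)\chi_{E_n}\|_{X(\R)}\to 0$ from the absolute continuity of $fw$ in $X(\R)$. One small quibble with your optional aside: Lemma~\ref{le:AC-property} as stated only gives the implication from absolute continuity to the $\delta$--$\eps$ smallness condition, not the equivalence you claim, but this does not affect your main proof.
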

\begin{proof}
The proof is a literal repetition of that one of \cite[Proposition~2.6]{K03}.
By Lemma~\ref{le:WBFS}(b), $X(\R,w)$ is a Banach function space.
If $f\in X(\R,w)$, then $fw\in X(\R)$ has absolutely continuous norm in $X(\R)$.
Therefore,
\[
\|f\chi_{E_n}\|_{X(\R,w)}=\|fw\chi_{E_n}\|_{X(\R)}\to 0
\]
for every
sequence $\{E_n\}_{n=1}^\infty$ of measurable sets on $\R$ satisfying
$\chi_{E_n}\to 0$ a.e. on $\R$ as $n\to \infty$. Thus, $f\in X(\R,w)$
has absolutely continuous norm in $X(\R,w)$.
\end{proof}
From Lemma~\ref{le:WBFS} and \cite[Chap.~1, Corollaries 4.3, 4.4]{BS88}
we obtain the following.
\begin{lemma}\label{le:duality-reflexivity}
Let $X(\R)$ be a Banach function space and $w:\R\to[0,\infty]$ be a weight
such that $w\in X_{\rm loc}(\R)$ and $1/w\in X_{\rm loc}'(\R)$.

\begin{enumerate}
\item[{\rm (a)}]
The Banach space dual $[X(\R,w)]^*$ of the weighted Banach function space
$X(\R,w)$ is isometrically isomorphic to the associate space
$X'(\R,w^{-1})$ if and only if $X(\R,w)$ has absolutely continuous norm.
If this is the case, then the general form of a linear
functional on $X(\R,w)$ is given by
\[
G(f):=\int_\R f(x)\overline{g(x)}\,dx
\quad\text{for}\quad
g\in X'(\R,w^{-1})
\]
and $\|G\|_{[X(\R,w)]^*}=\|g\|_{X'(\R,w^{-1})}$.

\item[{\rm (b)}]
The weighted Banach function space $X(\R,w)$ is reflexive if and only if
both $X(\R,w)$ and $X'(\R,w^{-1})$ have absolutely continuous norm.
\end{enumerate}
\end{lemma}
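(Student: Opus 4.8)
The plan is to derive Lemma~\ref{le:duality-reflexivity} by transporting the corresponding classical facts for (unweighted) Banach function spaces—namely \cite[Chap.~1, Corollaries~4.3 and~4.4]{BS88}—through the identification established in Lemma~\ref{le:WBFS}. The key observation is that under the hypotheses $w\in X_{\rm loc}(\R)$ and $1/w\in X_{\rm loc}'(\R)$, Lemma~\ref{le:WBFS}(b) tells us that $X(\R,w)$ is itself a genuine Banach function space, generated by the Banach function norm $\rho_w$, and Lemma~\ref{le:WBFS}(c) identifies its associate space as $X'(\R,w^{-1})$. Hence the cited corollaries from \cite{BS88}, which are stated for an arbitrary Banach function space and its associate space, apply verbatim with $X(\R)$ replaced by $X(\R,w)$ and $X'(\R)$ replaced by $X'(\R,w^{-1})$.

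For part~(a), I would invoke \cite[Chap.~1, Corollary~4.3]{BS88}: for any Banach function space $Y$, the Banach dual $Y^*$ is isometrically isomorphic to the associate space $Y'$ if and only if $Y$ has absolutely continuous norm, and in that case every bounded linear functional on $Y$ has the integral representation against a function in $Y'$ with matching norm. Applying this with $Y=X(\R,w)$ and $Y'=X'(\R,w^{-1})$ yields exactly the stated equivalence and the formula for $G(f)$; the only bookkeeping is to write the pairing as $\int_\R f\overline{g}\,dx$ rather than $\int_\R fg\,dx$, which is the customary convention for complex scalars and changes nothing essential.

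For part~(b), I would use \cite[Chap.~1, Corollary~4.4]{BS88}: a Banach function space $Y$ is reflexive if and only if both $Y$ and $Y'$ have absolutely continuous norm. Again taking $Y=X(\R,w)$, so that $Y'=X'(\R,w^{-1})$ by Lemma~\ref{le:WBFS}(c), gives the claim immediately. (One may also note for consistency that $X'(\R,w^{-1})$, being the associate space of the Banach function space $X(\R,w)$, is automatically a Banach function space, so the phrase ``absolutely continuous norm'' is meaningful for it; alternatively this follows by applying Lemma~\ref{le:WBFS}(b) to $X'(\R)$ with weight $1/w$, using that $1/w\in X_{\rm loc}'(\R)$ and $w\in X_{\rm loc}(\R)=(X'_{\rm loc})'(\R)$ by the Lorentz–Luxemburg theorem.)

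There is essentially no analytic obstacle here; the entire content is the reduction via Lemma~\ref{le:WBFS}. The one point requiring a moment's care is making sure the hypotheses of the \cite{BS88} corollaries are met for the space $X(\R,w)$—that is, that it really is a Banach function space with associate space $X'(\R,w^{-1})$—but this is precisely what parts~(b) and~(c) of Lemma~\ref{le:WBFS} supply, so the proof is short.
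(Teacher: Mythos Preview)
Your proposal is correct and follows exactly the same route as the paper: the lemma is stated as a direct consequence of Lemma~\ref{le:WBFS} together with \cite[Chap.~1, Corollaries~4.3 and~4.4]{BS88}, with no additional argument supplied. Your careful identification of which parts of Lemma~\ref{le:WBFS} are used (parts (b) and (c)) and the remark that $X'(\R,w^{-1})$ is itself a Banach function space are, if anything, more detailed than what the paper records.
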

\begin{corollary}\label{co:weighted-reflexivity}
Let $X(\R)$ be a Banach function space and $w:\R\to[0,\infty]$ be a weight
such that $w\in X_{\rm loc}(\R)$ and $1/w\in X_{\rm loc}'(\R)$.
If $X(\R)$ is reflexive, then $X(\R,w)$ is reflexive.
\end{corollary}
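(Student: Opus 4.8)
The plan is to reduce the reflexivity of $X(\R,w)$ to that of $X(\R)$ via the characterizations already assembled. By Lemma~\ref{le:duality-reflexivity}(b), $X(\R,w)$ is reflexive if and only if both $X(\R,w)$ and its associate space $X'(\R,w^{-1})$ have absolutely continuous norm. So it suffices to verify that both of these spaces inherit absolute continuity of the norm from the hypothesis that $X(\R)$ is reflexive.

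First I would invoke the corresponding characterization for the unweighted space: by \cite[Chap.~1, Corollary~4.4]{BS88} (which is exactly the unweighted case $w\equiv 1$ of Lemma~\ref{le:duality-reflexivity}(b)), reflexivity of $X(\R)$ implies that both $X(\R)$ and $X'(\R)$ have absolutely continuous norm. Next, note that the standing assumptions $w\in X_{\rm loc}(\R)$ and $1/w\in X_{\rm loc}'(\R)$ are symmetric under passing to the associate space: since $X''(\R)=X(\R)$ by the Lorentz-Luxemburg theorem, having $1/w\in X_{\rm loc}'(\R)$ and $w=1/(1/w)\in X_{\rm loc}(\R)=X_{\rm loc}''(\R)$ means the pair $(X'(\R),w^{-1})$ satisfies the same hypotheses as $(X(\R),w)$. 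Therefore Lemma~\ref{le:AC} applies twice: once to conclude that $X(\R,w)$ has absolutely continuous norm (because $X(\R)$ does), and once with $X(\R)$ replaced by $X'(\R)$ and $w$ replaced by $w^{-1}$ to conclude that $X'(\R,w^{-1})$ has absolutely continuous norm (because $X'(\R)$ does). Combining these two facts with Lemma~\ref{le:duality-reflexivity}(b) yields that $X(\R,w)$ is reflexive.

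The only point requiring a little care is the bookkeeping when applying Lemma~\ref{le:AC} to the associate space: one must check that $X'(\R,w^{-1})$, as defined here, is literally the weighted space built over the Banach function space $X'(\R)$ with the weight $w^{-1}$, and that the local-integrability hypotheses needed for that application are precisely $w^{-1}\in X_{\rm loc}'(\R)$ and $1/w^{-1}=w\in (X')_{\rm loc}'(\R)=X_{\rm loc}(\R)$, both of which are among our assumptions. Once this identification is made, the proof is a short concatenation of the lemmas above and involves no new estimates. I expect no genuine obstacle; the main subtlety is merely ensuring the associate-space hypotheses are stated in the symmetric form so that Lemma~\ref{le:AC} can be quoted verbatim in both directions.
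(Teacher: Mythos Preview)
Your proof is correct and follows essentially the same route as the paper: invoke \cite[Chap.~1, Corollary~4.4]{BS88} to get absolute continuity of the norm in both $X(\R)$ and $X'(\R)$, transfer it to $X(\R,w)$ and $X'(\R,w^{-1})$ via Lemma~\ref{le:AC}, and conclude by Lemma~\ref{le:duality-reflexivity}(b). The extra bookkeeping you do (checking the symmetric local-integrability hypotheses via $X''(\R)=X(\R)$ so that Lemma~\ref{le:AC} applies to the pair $(X'(\R),w^{-1})$) is a point the paper leaves implicit but is exactly the right justification.
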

\begin{proof}
The proof is a literal repetition of that one of \cite[Corollary~2.8]{K03}.
If $X(\R)$ is reflexive, then, by \cite[Chap.~1, Corollary~4.4]{BS88},
both $X(\R)$ and $X'(\R)$ have absolutely continuous norm. In that case, due to
Lemma~\ref{le:AC}, both $X(\R,w)$ and $X'(\R,w^{-1})$ have absolutely
continuous norm. By Lemma~\ref{le:duality-reflexivity}(b), $X(\R,w)$ is reflexive.
\end{proof}
\subsection{Density of smooth compactly supported functions}
For a subset $Y$ of $L^\infty(\R)$, let $Y_0$ denote the set of all compactly
supported functions in $Y$.
\begin{lemma}\label{le:nice-subset}
Let $X(\R)$ be a Banach function space.
\begin{enumerate}
\item[{\rm(a)}]
$L_0^\infty(\R)\subset X(\R)$.

\item[{\rm(b)}]
If $X(\R)$ has absolutely continuous norm, then $L_0^\infty(\R)$,
$C_0(\R)$, and $C_0^\infty(\R)$ are dense in $X(\R)$.
\end{enumerate}
\end{lemma}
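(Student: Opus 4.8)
The plan is to deduce part~(a) directly from the Banach function space axioms, and to establish part~(b) by a three-stage approximation argument, passing from an arbitrary $f\in X(\R)$ first to bounded compactly supported functions, then to continuous compactly supported functions, and finally to smooth ones.

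For part~(a): if $f\in L_0^\infty(\R)$ and $E:=\operatorname{supp}f$, then $|f|\le\|f\|_{L^\infty(\R)}\chi_E$ with $|E|<\infty$, so Axiom~(A4) gives $\rho(\chi_E)<\infty$, and then the lattice property~(A2) together with homogeneity~(A1) yield $\rho(|f|)\le\|f\|_{L^\infty(\R)}\rho(\chi_E)<\infty$; hence $f\in X(\R)$. I also record for later use that Axiom~(A5) applied to $E=[-n,n]$ forces every $f\in X(\R)$ to be finite a.e.

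For part~(b) I would argue in three steps. \textbf{Step 1 (density of $L_0^\infty(\R)$).} For $f\in X(\R)$ set $E_n:=\{x\in\R:|x|\le n,\ |f(x)|\le n\}$; since $f$ is finite a.e., $\chi_{\R\setminus E_n}\to 0$ a.e. Each $f\chi_{E_n}$ lies in $L_0^\infty(\R)$, and absolute continuity of the norm of $X(\R)$ gives $\|f-f\chi_{E_n}\|_{X(\R)}=\|f\chi_{\R\setminus E_n}\|_{X(\R)}\to 0$. \textbf{Step 2 (density of $C_0(\R)$).} It suffices to approximate $g\in L_0^\infty(\R)$. Fix a bounded open set $U\supset\operatorname{supp}g$. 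By Lusin's theorem, for every $\delta>0$ there is $h\in C_0(\R)$ with $\operatorname{supp}h\subset\overline{U}$, $\|h\|_{L^\infty(\R)}\le\|g\|_{L^\infty(\R)}$ and $g=h$ off a set $F\subset\overline{U}$ with $|F|<\delta$; then $|g-h|\le 2\|g\|_{L^\infty(\R)}\chi_F$, whence $\|g-h\|_{X(\R)}\le 2\|g\|_{L^\infty(\R)}\|\chi_F\|_{X(\R)}$. \textbf{Step 3 (density of $C_0^\infty(\R)$).} Mollify $h\in C_0(\R)$ by $h*\varphi_\eps$, where $\varphi_\eps$ is a standard smooth approximate identity supported in $(-\eps,\eps)$; these functions belong to $C_0^\infty(\R)$, are supported in one fixed bounded set $V$ for all small $\eps$, and converge uniformly to $h$, so $|h-h*\varphi_\eps|\le\|h-h*\varphi_\eps\|_{L^\infty(\R)}\chi_V$ and $\|h-h*\varphi_\eps\|_{X(\R)}\le\|h-h*\varphi_\eps\|_{L^\infty(\R)}\,\rho(\chi_V)\to 0$. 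Since $C_0^\infty(\R)\subset C_0(\R)\subset L_0^\infty(\R)\subset X(\R)$, the three steps together give density of all three classes.

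The only delicate point is the claim used in Step~2 that $\|\chi_F\|_{X(\R)}\to 0$ as $|F|\to 0$: this fails for arbitrary measurable sets, but here $F\subset\overline{U}$, and $\chi_{\overline{U}}\in X(\R)$ (by part~(a)) has absolutely continuous norm, so Lemma~\ref{le:AC-property} applied to $\chi_{\overline{U}}$ supplies the $\delta$ making $\|\chi_F\|_{X(\R)}$ as small as desired. I expect this to be the main (and essentially only) obstacle; the remaining ingredients are routine uses of the axioms, Lusin's theorem, and mollification.
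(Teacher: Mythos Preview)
Your proof is correct and follows essentially the same three-stage approach as the paper: axioms for part~(a), then truncation $\to$ Lusin $\to$ mollification for part~(b), with the same use of Lemma~\ref{le:AC-property} on $\chi_{\overline{U}}$ to control $\|\chi_F\|_{X(\R)}$. The only difference is cosmetic: for Step~1 the paper cites \cite[Chap.~1, Proposition~3.10 and Theorem~3.11]{BS88}, whereas you give the direct truncation argument via $E_n=\{|x|\le n,\ |f(x)|\le n\}$, which is in fact what underlies those cited results.
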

\begin{proof}
Part (a) follows from the definition of a Banach function space.

(b) From \cite[Chap.~1, Proposition~3.10 and Theorem~3.11]{BS88} it follows that
$L_ 0^\infty(\R)$ is dense in $X(\R)$.

Let us show that each function $u\in L_0^\infty(\R)$ can be approximated by a
function from $C_0(\R)$ in the norm of $X(\R)$. We have ${\rm supp}\,u\subset Q$
and $|u(x)|\le a$ for almost all $x\in\R$, where $Q$ is some finite closed segment
and $a>0$. By Axiom (A4), $\chi_Q\in X(\R)$ and $\chi_Q$ has absolutely continuous
norm by the hypothesis. From Lemma~\ref{le:AC-property} it follows that for
every $\eps>0$ there is a $\delta>0$ such that $|E|<\delta$ implies that
$\|\chi_Q\chi_E\|_{X(\R)}<\eps$. By Luzin's theorem, for such a $\delta>0$ there
is a continuous function $v$ supported in $Q$ such that $|v(x)|\le a$ and the
measure of the set $\widetilde{Q}:=\{x\in Q:u(x)\ne v(x)\}$ is less than $\delta$.
Then
\[
|u(x)-v(x)|\le 2a\chi_{\widetilde{Q}}(x)\quad (x\in\R).
\]
Therefore, by Axiom (A2),
\[
\|u-v\|_{X(\R)}\le 2a\|\chi_Q\chi_{\widetilde{Q}}\|_{X(\R)}<2a\eps.
\]
Hence, each function $u\in L_0^\infty(\R)$ can be approximated by a function
from $C_0(\R)$ in the norm of $X(\R)$. Thus, $C_0(\R)$ is dense in $X(\R)$.

Now let us prove that each function $v\in C_0(\R)$ can be approximated by a
function from $C_0^\infty(\R)$ in the norm of $X(\R)$. Let $a\in C_0^\infty(\R)$
and $\int_\R a(x)dx=1$. Consider
\[
v_t(x)=\frac{1}{t}\int_\R a\left(\frac{y}{t}\right)v(x-y)\,dy\quad (t>0).
\]
It is easy to see that $v_t\in C_0^\infty(\R)$. Fix an interval $Q$
containing the supports of $v$ and $v_t$. Then for every $\eps>0$ there is
a $t>0$ such that $|v_t(x)-v(x)|<\eps$ for all $x\in Q$. Hence,
\[
\|v_t-v\|_{X(\R)}=\|(v_t-v)\chi_Q\|_{X(\R)}<\eps\|\chi_Q\|_{X(\R)},
\]
that is, $v\in C_0(\R)$ can be approximated by a function from $C_0^\infty(\R)$
in the norm of $X(\R)$. Thus, $C_0^\infty(\R)$ is dense in $X(\R)$.
\end{proof}
From \cite[Chap.~1, Corollary~5.6]{BS88} one can extract the following.
\begin{lemma}\label{le:separability-AC}
A Banach function space $X(\R)$ is separable if and only if it has absolutely
continuous norm.
\end{lemma}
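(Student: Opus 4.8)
The plan is to obtain this equivalence as a direct specialization of \cite[Chap.~1, Corollary~5.6]{BS88}. That corollary asserts that, over a $\sigma$-finite measure space whose underlying measure is \emph{separable}, a Banach function space is separable (as a metric space) if and only if it has absolutely continuous norm. Hence the whole task reduces to verifying that Lebesgue measure on $\R$ satisfies the hypotheses of the cited statement.

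First I would recall the notion of a separable measure as used in \cite[Chap.~1]{BS88}: a $\sigma$-finite measure $\mu$ on a set $\mathcal{R}$ is separable when the collection of measurable sets of finite measure, metrized by $d(E,F):=\mu(E\triangle F)$, possesses a countable dense subset (equivalently, when $L^1(\mathcal{R},\mu)$ is separable). This is the only structural condition, besides $\sigma$-finiteness, entering Corollary~5.6.

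Second, I would check that Lebesgue measure on $\R$ has both properties: it is $\sigma$-finite because $\R=\bigcup_{n\in\N}(-n,n)$, and it is separable because every set of finite Lebesgue measure can be approximated, to within any $\eps>0$ in the $d$-metric, by a finite union of bounded open intervals with rational endpoints, and there are only countably many such unions. Thus all hypotheses of \cite[Chap.~1, Corollary~5.6]{BS88} are fulfilled for $(\R,dx)$.

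Finally, applying that corollary to the Banach function space $X(\R)$ yields exactly the stated equivalence. The only point requiring attention is the bookkeeping of the cited result: one must invoke the form in which the "separable measure" hypothesis has already been discharged for $(\R,dx)$, after which no further argument is needed beyond the elementary approximation remark above; there is no genuine obstacle here.
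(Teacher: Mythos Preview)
Your proposal is correct and follows essentially the same approach as the paper: both extract the lemma directly from \cite[Chap.~1, Corollary~5.6]{BS88}, with you additionally spelling out why Lebesgue measure on $\R$ is $\sigma$-finite and separable so that the cited corollary applies. There is nothing to add.
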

Gathering the results mentioned above, we arrive at the next result.
\begin{lemma}\label{le:density}
Let $X(\R)$ be a Banach function space and $w:\R\to[0,\infty]$ be a weight
such that $w\in X_{\rm loc}(\R)$ and $1/w\in X_{\rm loc}'(\R)$.
\begin{enumerate}
\item[{\rm (a)}]
If $X(\R)$ is separable, then $L_0^\infty(\R)$, $C_0(\R)$, and $C_0^\infty(\R)$ are
dense in the weighted Banach function space $X(\R,w)$.

\item[{\rm (b)}]
If $X(\R)$ is reflexive, then $L_0^\infty(\R)$, $C_0(\R)$, and $C_0^\infty(\R)$ are
dense in the weighted Banach function spaces $X(\R,w)$ and $X'(\R,w^{-1})$.
\end{enumerate}
\end{lemma}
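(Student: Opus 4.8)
The plan is to obtain Lemma~\ref{le:density} by assembling the results already established, the only genuine work being to verify that the weighted spaces in play satisfy the hypotheses of Lemma~\ref{le:nice-subset}; even this is routine bookkeeping around absolute continuity of the norm.

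For part~(a) I would start from separability. By Lemma~\ref{le:separability-AC}, the Banach function space $X(\R)$ has absolutely continuous norm. Since $w\in X_{\rm loc}(\R)$ and $1/w\in X_{\rm loc}'(\R)$, Lemma~\ref{le:WBFS}(b) shows that $X(\R,w)$ is itself a Banach function space, and Lemma~\ref{le:AC} transfers absolute continuity of the norm from $X(\R)$ to $X(\R,w)$. Now Lemma~\ref{le:nice-subset} applies to the Banach function space $X(\R,w)$: its part~(a) gives $L_0^\infty(\R)\subset X(\R,w)$, hence also $C_0(\R),C_0^\infty(\R)\subset X(\R,w)$, and its part~(b), available because $X(\R,w)$ has absolutely continuous norm, gives that all three sets are dense in $X(\R,w)$.

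For part~(b) I would run the same argument in parallel for $X(\R,w)$ and its associate space. Reflexivity of $X(\R)$ implies, by \cite[Chap.~1, Corollary~4.4]{BS88}, that both $X(\R)$ and $X'(\R)$ have absolutely continuous norm; the space $X(\R,w)$ is then treated exactly as in part~(a). For the associate space, I would invoke the Lorentz--Luxemburg theorem $(X'(\R))'=X(\R)$, so that the given hypotheses read, for the base space $X'(\R)$ with weight $w^{-1}$, as $w^{-1}\in X_{\rm loc}'(\R)$ and $1/w^{-1}=w\in X_{\rm loc}(\R)=(X'(\R))_{\rm loc}'$. Hence Lemma~\ref{le:WBFS}(b), applied with base space $X'(\R)$ and weight $w^{-1}$, shows $X'(\R,w^{-1})$ is a Banach function space, and Lemma~\ref{le:AC} shows it has absolutely continuous norm; applying Lemma~\ref{le:nice-subset} to $X'(\R,w^{-1})$ as well finishes the proof.

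There is essentially no obstacle here; the one point that must not be skipped is using $X''=X$ so that the hypothesis ``$1/w\in X_{\rm loc}'(\R)$'' plays, for the weighted associate space $X'(\R,w^{-1})$, precisely the role that ``$w\in X_{\rm loc}(\R)$'' plays for $X(\R,w)$, and vice versa. This is exactly the Lorentz--Luxemburg theorem already cited just before the corollary following Lemma~\ref{le:WBFS}, so it is at hand.
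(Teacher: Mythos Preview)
Your proof is correct and follows essentially the same route as the paper: separability (resp.\ reflexivity) gives absolute continuity of the norm on $X(\R)$ (resp.\ on both $X(\R)$ and $X'(\R)$), Lemma~\ref{le:AC} passes this to the weighted spaces, and Lemma~\ref{le:nice-subset}(b) yields density. Your extra care in part~(b), invoking the Lorentz--Luxemburg theorem to check the hypotheses of Lemma~\ref{le:WBFS}(b) for the base space $X'(\R)$ with weight $w^{-1}$, is a harmless elaboration; the paper instead implicitly relies on Lemma~\ref{le:WBFS}(c), which already identifies $X'(\R,w^{-1})$ as the associate space of the Banach function space $X(\R,w)$.
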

\begin{proof}
(a) If $X(\R)$ is separable, then by Lemma~\ref{le:separability-AC}, $X(\R)$
has absolutely continuous norm. Therefore $X(\R,w)$ has absolutely
continuous norm too, in view of Lemma~\ref{le:AC}. Hence, from
Lemma~\ref{le:nice-subset}(b) we derive that $L_0^\infty(\R)$,
$C_0(\R)$, and $C_0^\infty(\R)$ are dense in $X(\R,w)$. Part (a) is proved.

\medskip
(b) If $X(\R)$ is reflexive, then by \cite[Chap.~1, Corollary~4.4]{BS88},
both $X(\R)$ and $X'(\R)$ have absolutely continuous norm. Hence both
$X(\R,w)$ and $X'(\R,w^{-1})$ have absolutely continuous norm in view of
Lemma~\ref{le:AC}. Thus, from Lemma~\ref{le:nice-subset}(b) we get that
$L_0^\infty(\R)$, $C_0(\R)$, and $C_0^\infty(\R)$ are dense in $X(\R,w)$
and in $X'(\R,w^{-1})$.
\end{proof}
\section{Boundedness of the Cauchy singular integral operator on weighted Banach function spaces}
\subsection{Well-known properties of the Cauchy singular integral operator}
One says that a linear operator $T$ from $L^1(\R)$ into the space of
complex-valued measurable functions on $\R$ is of weak type $(1,1)$
if for every $\alpha>0$,
\[
|\{x\in\R:|(Tf)(x)|>\alpha\}|\le\frac{C_K}{\alpha}\|f\|_{L^1(\R)}
\]
with some absolute constant $C_K>0$.

The following results are proved in many standard  texts on Harmonic Analysis,
see e.g. \cite[Chap.~3, Theorem 4.9(b)]{BS88} or \cite[pp.~51--52]{D01}.
\begin{theorem}\label{th:Kolmogorov-Riesz}
\begin{enumerate}
\item[{\rm(a)}] \textbf{(Kolmogorov).}
The Cauchy singular integral operator $S$ is of weak type $(1,1)$.

\item[{\rm(b)}] \textbf{(M.~Riesz).}
The Cauchy singular integral operator $S$ is bounded on $L^p(\R)$ for every
$p\in(1,\infty)$.
\end{enumerate}
\end{theorem}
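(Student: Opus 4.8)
The plan is to follow the classical Calder\'on--Zygmund scheme: the analytic core is $L^2$-boundedness, obtained from the Fourier transform; from this and a Calder\'on--Zygmund decomposition one gets the weak type $(1,1)$ estimate of part~(a); finally part~(b) follows by the Marcinkiewicz interpolation theorem together with a duality argument. For the $L^2$ bound, write $S=iH$, where $H$ is the Hilbert transform; for a Schwartz function $f$ one computes $\widehat{Sf}(\xi)=\operatorname{sgn}(\xi)\,\widehat{f}(\xi)$ (for a suitable normalization of the Fourier transform), whence $\|Sf\|_{L^2(\R)}=\|f\|_{L^2(\R)}$ by Plancherel's theorem. Since Schwartz functions are dense in $L^2(\R)$, this extends $S$ to an isometry of $L^2(\R)$, and the principal-value representation is recovered on this dense class via the usual truncation argument.

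For part~(a), fix $f\in L^1(\R)$ and $\alpha>0$, and apply the Calder\'on--Zygmund decomposition of $f$ at height $\alpha$: $f=g+b$ with $b=\sum_j b_j$, where $\|g\|_{L^\infty(\R)}\le c\alpha$, $\|g\|_{L^1(\R)}\le\|f\|_{L^1(\R)}$, each $b_j$ is supported on a bounded interval $Q_j$ with $\int_{Q_j}b_j=0$ and $\|b_j\|_{L^1(\R)}\le c\alpha|Q_j|$, and $\sum_j|Q_j|\le(c/\alpha)\|f\|_{L^1(\R)}$. Then $|\{|Sf|>\alpha\}|\le|\{|Sg|>\alpha/2\}|+|\{|Sb|>\alpha/2\}|$. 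The good part is handled by Chebyshev's inequality and the $L^2$ bound:
\[
|\{|Sg|>\alpha/2\}|\le\frac{4}{\alpha^2}\|Sg\|_{L^2(\R)}^2\le\frac{4}{\alpha^2}\|g\|_{L^2(\R)}^2\le\frac{4c}{\alpha}\|g\|_{L^1(\R)}\le\frac{C}{\alpha}\|f\|_{L^1(\R)}.
\]
For the bad part, let $Q_j^*$ be the interval concentric with $Q_j$ of twice the length, put $\Omega^*=\bigcup_j Q_j^*$, so that $|\Omega^*|\le 2\sum_j|Q_j|\le(C/\alpha)\|f\|_{L^1(\R)}$. Using $\int_{Q_j}b_j=0$ and the kernel estimate $|(\tau-x)^{-1}-(c_j-x)^{-1}|\le C|Q_j|\,|x-c_j|^{-2}$ valid for $\tau\in Q_j$ and $x\notin Q_j^*$ ($c_j$ the center of $Q_j$), one obtains $\int_{\R\setminus Q_j^*}|Sb_j(x)|\,dx\le C\|b_j\|_{L^1(\R)}$; summing and using $\sum_j\|b_j\|_{L^1(\R)}\le c\alpha\sum_j|Q_j|\le C\|f\|_{L^1(\R)}$ gives $\int_{\R\setminus\Omega^*}|Sb(x)|\,dx\le C\|f\|_{L^1(\R)}$, hence $|\{x\notin\Omega^*:|Sb(x)|>\alpha/2\}|\le(C/\alpha)\|f\|_{L^1(\R)}$. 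Adding this to $|\Omega^*|$ proves the weak type $(1,1)$ estimate.

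For part~(b), the Marcinkiewicz interpolation theorem applied to the weak $(1,1)$ bound of part~(a) and the strong $(2,2)$ bound yields boundedness of $S$ on $L^p(\R)$ for every $p\in(1,2]$. For $p\in[2,\infty)$ I would pass to the transpose: the kernel $1/\bigl(\pi i(\tau-x)\bigr)$ coincides up to sign with its own transpose kernel, so the Banach-space adjoint of $S$ on $L^p(\R)$ equals $\pm S$ on $L^{p'}(\R)$, where $1/p+1/p'=1$; since $p'\in(1,2]$ is already covered, $S$ is bounded on $L^p(\R)$ as well, and part~(b) follows.

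The step I expect to be the main obstacle is making the argument for part~(a) fully rigorous: one must justify the a.e.\ existence of the principal value and the interchange of $S$ with the infinite sum $b=\sum_j b_j$. The standard remedy is to work throughout with the truncated kernels $1/(\tau-x)$ restricted to $|\tau-x|>\eps$, prove all the above estimates with constants independent of $\eps$, and then let $\eps\to0$, noting in addition that the associated maximal truncated operator obeys the same bounds; these points are routine but are where the care lies.
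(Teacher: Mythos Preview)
Your argument is correct and is essentially the textbook proof via the Calder\'on--Zygmund decomposition, Marcinkiewicz interpolation, and duality. However, the paper does not actually prove this theorem: it is stated as a classical result and the authors simply cite standard references (\cite[Chap.~3, Theorem~4.9(b)]{BS88} and \cite[pp.~51--52]{D01}) in lieu of a proof. So there is nothing to compare against beyond noting that your sketch is precisely the argument one finds in those sources.
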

\begin{theorem}\label{th:S-L2}
If $f,g\in L^2(\R)$, then
\begin{eqnarray}
(S^2f)(x) &=& f(x)\quad (x\in\R),
\label{eq:nice-relations-1}
\\
\int_\R (Sf)(x)\overline{g(x)}\,dx &=& \int_\R f(x)\overline{(Sg)(x)}\,dx.
\label{eq:nice-relations-2}
\end{eqnarray}
\end{theorem}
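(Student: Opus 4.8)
The plan is to reduce everything to the Fourier‑multiplier representation of $S$ on $L^2(\R)$ together with Plancherel's theorem. First I would record that $S$ coincides, up to a unimodular constant, with the Hilbert transform: writing $(Hf)(x)=\frac1\pi\,\mathrm{p.v.}\int_\R\frac{f(\tau)}{x-\tau}\,d\tau$, a direct manipulation of the principal‑value integral gives $S=iH$. It is classical (see the references quoted for Theorem~\ref{th:Kolmogorov-Riesz}, or any standard text on singular integral operators) that $H$ is bounded on $L^2(\R)$ and that, with the Fourier transform normalized by $\widehat f(\xi)=\int_\R f(x)e^{-ix\xi}\,dx$, one has $\widehat{Hf}(\xi)=-i\,\mathrm{sgn}(\xi)\widehat f(\xi)$ for a.e. $\xi\in\R$ and all $f\in L^2(\R)$; equivalently, $S$ is the Fourier multiplier operator with symbol $\mathrm{sgn}$, i.e.
\[
\widehat{Sf}(\xi)=\mathrm{sgn}(\xi)\,\widehat f(\xi)\qquad\text{for a.e. }\xi\in\R,\ f\in L^2(\R).
\]

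Once this is in hand, both assertions are immediate. Since $\mathrm{sgn}(\xi)^2=1$ for a.e. $\xi$, we get $\widehat{S^2f}(\xi)=\mathrm{sgn}(\xi)^2\widehat f(\xi)=\widehat f(\xi)$, so $S^2f=f$ a.e. by injectivity of the Fourier transform; this is \eqref{eq:nice-relations-1}. For \eqref{eq:nice-relations-2}, apply Parseval's identity twice and use that $\mathrm{sgn}(\xi)$ is real‑valued:
\[
\int_\R (Sf)(x)\overline{g(x)}\,dx=\frac1{2\pi}\int_\R \widehat{Sf}(\xi)\,\overline{\widehat g(\xi)}\,d\xi=\frac1{2\pi}\int_\R \widehat f(\xi)\,\overline{\mathrm{sgn}(\xi)\widehat g(\xi)}\,d\xi=\int_\R f(x)\,\overline{(Sg)(x)}\,dx,
\]
where in the last step we used $\overline{\widehat{Sg}(\xi)}=\overline{\mathrm{sgn}(\xi)\widehat g(\xi)}$ and Parseval again. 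This proves \eqref{eq:nice-relations-2}.

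An alternative, Fourier‑free, route is to introduce the Cauchy integral $\Phi(z)=\frac1{2\pi i}\int_\R\frac{f(\tau)}{\tau-z}\,d\tau$ for $\mathrm{Im}\,z\neq0$, identify its nontangential boundary values via the Sokhotski--Plemelj formulas as $\Phi^\pm=\frac12(I\pm S)f$, and then use the Hardy‑space decomposition $L^2(\R)=H^2_+\oplus H^2_-$ to see that $P^\pm:=\frac12(I\pm S)$ are complementary orthogonal projections; from $(P^\pm)^2=P^\pm$, $P^+P^-=P^-P^+=0$, $P^++P^-=I$ one reads off $S^2=(P^+-P^-)^2=P^++P^-=I$, while self‑adjointness of the $P^\pm$ gives $S^*=S$. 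There is no genuine analytic obstacle here: the only real decision is how much of this classical $L^2$‑theory to reprove versus cite. Since Theorem~\ref{th:Kolmogorov-Riesz} is already imported from standard references, the most economical presentation is to quote the $L^2$‑multiplier formula for $H$ from the same sources and then conclude Theorem~\ref{th:S-L2} in the two displayed lines above; no delicate estimates are needed.
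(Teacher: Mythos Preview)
Your argument is correct: the identification $S=iH$, the Fourier multiplier $\widehat{Sf}=\mathrm{sgn}(\xi)\widehat f$, and the two-line derivation of $S^2=I$ and $S^*=S$ via Plancherel are all standard and sound, as is the alternative Hardy-space route through the projections $P^\pm=\tfrac12(I\pm S)$.

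The paper itself does \emph{not} prove Theorem~\ref{th:S-L2} at all: it is placed in the subsection ``Well-known properties of the Cauchy singular integral operator'' and simply stated without argument, in the same spirit as Theorem~\ref{th:Kolmogorov-Riesz}, which is referred to the textbooks \cite{BS88,D01}. So your final recommendation---to cite the $L^2$-multiplier formula from the same standard sources and read off the two identities---is exactly the paper's stance; your written-out derivation goes beyond what the paper supplies, but is entirely in line with it and would serve if a self-contained proof were wanted.
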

\subsection{Pointwise estimates for sharp maximal functions}\label{subsec:sharp}
For $\delta>0$ and $f\in L^\delta_{\rm loc}(\R)$, set
\[
f_{\delta}^{\#}(x):=\sup_{Q\ni x}\inf_{c\in\R}
\left(\frac{1}{|Q|}\int_Q|f(y)-c|^{\delta}dy\right)^{1/\delta}.
\]
The non-increasing rearrangement (see, e.g., \cite[Chap.~2,
Section~1]{BS88}) of a measurable function $f$ on $\R$ is
defined by
\[
f^*(t):=
\inf\big\{\lambda>0:|\{x\in\R:|f(x)|>\lambda\}|\le t\big\}
\quad(0<t<\infty).
\]
For a fixed $\lambda\in(0,1)$ and a given measurable function $f$
on $\R$, consider the local sharp maximal function
$M^{\#}_{\lambda}f$ defined by
\[
M^{\#}_{\lambda}f(x) := \sup_{Q\ni x}\inf_{c\in\R}
\big((f-c)\chi_Q\big)^*\left(\lambda|Q|\right).
\]
In all above definitions the suprema are taken over all intervals
$Q\subset\R$ containing $x$.

The following result was proved by Lerner \cite[Theorem~1]{L04}
for the case of $\R^n$.
\begin{theorem}[Lerner]\label{th:Lerner}
For a function $g\in L_{\rm loc}^1(\R)$ and a measurable function $\varphi$
satisfying
\begin{equation}\label{eq:Lerner}
|\{x\in\R:|\varphi(x)|>\alpha\}|<\infty \quad\mbox{for all}\quad
\alpha>0,
\end{equation}
one has
\[
\int_\R|\varphi(x)g(x)|\,dx
\le
C_L\int_\R M_{\lambda}^\# \varphi(x)Mg(x)\,dx,
\]
where $C_L>0$ and $\lambda\in(0,1)$ are some absolute constants.
\end{theorem}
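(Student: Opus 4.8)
This is the restriction to $\R$ of Lerner's \cite[Theorem~1]{L04} — intervals playing the role of cubes — so in principle one could simply quote it; for completeness I sketch the mechanism. The engine is a pointwise Calder\'on--Zygmund type bound, the \emph{local mean oscillation decomposition}, dominating $\varphi$ by a \emph{sparse} sum of its local oscillations. For an interval $Q$ and $\lambda\in(0,1/2)$ write $\omega_\lambda(\varphi;Q):=\big((\varphi-m_Q\varphi)\chi_Q\big)^*(\lambda|Q|)$, where $m_Q\varphi$ denotes a median of $\varphi$ on $Q$. If $s:=\big((\varphi-c)\chi_Q\big)^*(\lambda|Q|)$, then $|\{x\in Q:\varphi(x)>c+s\}|\le\lambda|Q|<|Q|/2$ and likewise on the other side, so $|m_Q\varphi-c|\le s$; hence $\{x\in Q:|\varphi(x)-m_Q\varphi|>2s\}\subset\{x\in Q:|\varphi(x)-c|>s\}$, and therefore $\omega_\lambda(\varphi;Q)\le 2\inf_{c\in\R}\big((\varphi-c)\chi_Q\big)^*(\lambda|Q|)\le 2\,M^\#_\lambda\varphi(x)$ for every $x\in Q$. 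This is how $M^\#_\lambda$ enters.

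The plan is as follows. First I would fix a bounded interval $Q_0$ and invoke the local decomposition there: there are absolute constants $C>0$, $\lambda\in(0,1/2)$, $\eta\in(0,1)$ and a family $\{Q_j\}$ of dyadic subintervals of $Q_0$ carrying pairwise disjoint $E_j\subset Q_j$ with $|E_j|\ge\eta|Q_j|$, such that $|\varphi(x)-m_{Q_0}\varphi|\le C\sum_j\omega_\lambda(\varphi;Q_j)\chi_{Q_j}(x)$ for a.e.\ $x\in Q_0$. Then I would take $Q_0=(-N,N)$ and let $N\to\infty$: hypothesis \eqref{eq:Lerner} is exactly what is needed here, since for every $\alpha>0$ and all large $N$ one has $|\{x\in(-N,N):|\varphi(x)|\ge\alpha\}|\le|\{x\in\R:|\varphi(x)|>\alpha/2\}|<\tfrac12|(-N,N)|$, which forces $|m_{(-N,N)}\varphi|<\alpha$, i.e.\ $m_{(-N,N)}\varphi\to0$. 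A diagonal argument over $N$ — on any fixed bounded region the candidate families eventually stabilise — then produces a single family $\{Q_j\}$ of intervals in $\R$, with associated disjoint $E_j\subset Q_j$, $|E_j|\ge\eta|Q_j|$, such that $|\varphi(x)|\le C\sum_j\omega_\lambda(\varphi;Q_j)\chi_{Q_j}(x)$ for a.e.\ $x\in\R$.

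The remainder is mechanical. Multiplying by $|g|$ and integrating,
\[
\int_\R|\varphi(x)g(x)|\,dx\le C\sum_j\omega_\lambda(\varphi;Q_j)\int_{Q_j}|g(x)|\,dx
=C\sum_j\omega_\lambda(\varphi;Q_j)\Big(\frac{1}{|Q_j|}\int_{Q_j}|g(y)|\,dy\Big)|Q_j|.
\]
For each $j$ and each $x\in Q_j$ one has $\frac{1}{|Q_j|}\int_{Q_j}|g|\le Mg(x)$ and, by the first paragraph, $\omega_\lambda(\varphi;Q_j)\le 2\,M^\#_\lambda\varphi(x)$; taking infima over $x\in E_j$ and using $|Q_j|\le\eta^{-1}|E_j|$, the $j$-th term is at most $2\eta^{-1}\big(\inf_{E_j}M^\#_\lambda\varphi\big)\big(\inf_{E_j}Mg\big)|E_j|\le 2\eta^{-1}\int_{E_j}M^\#_\lambda\varphi(x)Mg(x)\,dx$. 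Summing over the pairwise disjoint $E_j$ gives the claim with $C_L=2C\eta^{-1}$. The only genuinely delicate point is the globalisation step — passing from the decomposition on $(-N,N)$ to one valid on all of $\R$, and recognising that \eqref{eq:Lerner} is precisely the condition that makes the medians vanish in the limit; everything else is bookkeeping of the absolute constants $C$, $\lambda$, $\eta$, which change only by fixed numerical factors through the argument.
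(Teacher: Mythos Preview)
The paper does not prove Theorem~\ref{th:Lerner}; it simply attributes it to Lerner and cites \cite[Theorem~1]{L04}. Your opening sentence therefore already matches the paper's treatment exactly, and from the paper's standpoint nothing more is required.

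The sketch you add is a legitimate route, but it is not the route of the cited reference. You are invoking the \emph{local mean oscillation decomposition}---a pointwise sparse bound that Lerner developed several years after \cite{L04}. The 2004 paper proceeds instead through rearrangements: it combines a Str\"omberg--Jawerth--Torchinsky type inequality relating $(\varphi)^*$ to $(M^\#_\lambda\varphi)^*$ with a Herz-type estimate $\int|\varphi g|\le C\int_0^\infty \varphi^*(t)(Mg)^*(t)\,dt$, and then iterates. What your approach buys is a cleaner, essentially one-line passage from the decomposition to the inequality, and a transparent constant; what the rearrangement approach buys is that the globalisation you flag as ``the only genuinely delicate point'' is handled automatically, since one works with distribution functions on all of $\R$ from the outset and never needs to pass to a limit in a family of dyadic intervals.

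One caution on your sketch: the limiting step is indeed delicate in the way you suspect. Bounding $\int_{(-N,N)}|\varphi g|$ by $|m_{(-N,N)}\varphi|\int_{(-N,N)}|g|$ plus the sparse term does not obviously close, since $m_{(-N,N)}\varphi\to 0$ while $\int_{(-N,N)}|g|$ may diverge; the product can behave badly. The clean fix is either to include $Q_0$ itself among the $Q_j$ in the decomposition (so that the median term is absorbed into a single extra oscillation term, which is then handled like the rest), or to run the whole argument only for $g\in L^1(\R)$ first and remove that restriction at the end by monotone convergence. Either adjustment is routine, but the sketch as written glosses over it.
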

The sharp maximal functions can be related as follows.
\begin{lemma}[{\cite[Proposition~2.3]{KL05}}]\label{le:relation}
If $\delta>0,\lambda\in(0,1)$, and $f\in L_{\rm loc}^\delta(\R)$,
then
\[
M_\lambda^\# f(x)\le (1/\lambda)^{1/\delta}f_\delta^\#(x)
\quad (x\in\R).
\]
\end{lemma}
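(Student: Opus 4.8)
The plan is to reduce the claimed pointwise inequality to a single-interval estimate and then pass to the appropriate infimum and supremum. First I fix $x\in\R$. If $f_\delta^\#(x)=\infty$ there is nothing to prove, so I assume it is finite; since $f\in L_{\rm loc}^\delta(\R)$, all the averages $\frac{1}{|Q|}\int_Q|f(y)-c|^\delta\,dy$ appearing below are finite. I fix an interval $Q\ni x$ and a constant $c\in\R$, and set $g:=(f-c)\chi_Q$. It then suffices to prove
\[
g^*(\lambda|Q|)\le(1/\lambda)^{1/\delta}\left(\frac{1}{|Q|}\int_Q|f(y)-c|^\delta\,dy\right)^{1/\delta},
\]
since taking the infimum over $c\in\R$ followed by the supremum over all intervals $Q\ni x$ on both sides produces precisely $M_\lambda^\#f(x)\le(1/\lambda)^{1/\delta}f_\delta^\#(x)$.

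The core of the argument is a Chebyshev-type estimate. Let $\mu$ be any number with $0<\mu<g^*(\lambda|Q|)$. By the definition of the non-increasing rearrangement,
\[
g^*(\lambda|Q|)=\inf\big\{\nu>0:|\{y\in\R:|g(y)|>\nu\}|\le\lambda|Q|\big\},
\]
so the strict inequality $\mu<g^*(\lambda|Q|)$ forces $|\{y\in\R:|g(y)|>\mu\}|>\lambda|Q|$. On the other hand, since $g$ vanishes off $Q$ and $|g(y)|>\mu$ implies $|g(y)|^\delta>\mu^\delta$,
\[
\lambda|Q|<|\{y\in\R:|g(y)|>\mu\}|\le\frac{1}{\mu^\delta}\int_\R|g(y)|^\delta\,dy=\frac{1}{\mu^\delta}\int_Q|f(y)-c|^\delta\,dy.
\]
Hence $\mu<(1/\lambda)^{1/\delta}\big(\frac{1}{|Q|}\int_Q|f(y)-c|^\delta\,dy\big)^{1/\delta}$, and letting $\mu\uparrow g^*(\lambda|Q|)$ gives the displayed single-interval inequality.

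I expect no real obstacle here. The only points needing a little care are: extracting the strict distributional inequality $|\{y\in\R:|g(y)|>\mu\}|>\lambda|Q|$ for $\mu<g^*(\lambda|Q|)$ directly from the definition of $g^*$ as an infimum; and noting that $g$ is supported in $Q$, which converts the global integral $\int_\R|g|^\delta$ into the local one $\int_Q|f-c|^\delta$. The final passage to $\inf_{c\in\R}$ and then $\sup_{Q\ni x}$ is monotone and therefore preserves the one-sided bound, which completes the proof.
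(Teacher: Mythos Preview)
Your argument is correct: the Chebyshev-type bound $g^*(\lambda|Q|)\le (1/\lambda)^{1/\delta}\bigl(\tfrac{1}{|Q|}\int_Q|f-c|^\delta\bigr)^{1/\delta}$ follows exactly as you write, and the passage to $\inf_c$ and then $\sup_{Q\ni x}$ is legitimate since the single-interval inequality holds for every fixed $c$ and every fixed $Q$. Note that the paper does not give its own proof of this lemma but merely cites \cite[Proposition~2.3]{KL05}; your proof is precisely the standard argument one finds there, so there is nothing further to compare.
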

The following estimate was proved in \cite[Theorem~2.1]{AP94}
for the case of Calder\'on-Zygmund singular integral operators
with standard kernels in the sense of Coifman and Meyer on $\R^n$.
It is well known that the Cauchy kernel is an archetypical example
of a standard kernel (see e.g. \cite[p.~99]{D01}).
\begin{theorem}[\'Alvarez-P\'erez]\label{th:Alvarez-Perez}
If $0<\delta<1$, then for every $f\in C^{\infty}_0(\R)$,
\[
(Sf)_\delta^\#(x)\le C_{\delta}Mf(x)
\quad (x\in\R)
\]
where $C_\delta>0$ is some constant depending only on $\delta$.
\end{theorem}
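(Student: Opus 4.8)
The plan is to run the classical Calder\'on--Zygmund argument for the sharp maximal function of a singular integral with a standard kernel, specialized to the Cauchy kernel $k(x,\tau)=1/(\pi i(\tau-x))$. First I would note that, since $f\in C_0^\infty(\R)\subset L^2(\R)$, Theorem~\ref{th:Kolmogorov-Riesz}(b) gives $Sf\in L^2(\R)\subset L^\delta_{\rm loc}(\R)$, so $(Sf)_\delta^\#$ is well defined and it suffices to fix $x\in\R$, an arbitrary interval $Q\ni x$, and a constant $c$ for which
\[
\left(\frac{1}{|Q|}\int_Q|(Sf)(y)-c|^\delta\,dy\right)^{1/\delta}\le C_\delta\,Mf(x)
\]
with $C_\delta$ independent of $Q$ and $x$, and then pass to the supremum over $Q\ni x$. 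Writing $x_Q$ for the midpoint of $Q$ and $2^kQ$ for the concentric interval of $2^k$ times the length, I would split $f=f_1+f_2$ with $f_1:=f\chi_{2Q}$, $f_2:=f\chi_{\R\setminus 2Q}$, and choose $c:=(Sf_2)(x_Q)$, which is an absolutely convergent integral because the Cauchy kernel is bounded on $Q\times(\R\setminus 2Q)$ and $f$ is bounded with compact support. Then $(Sf)(y)-c=(Sf_1)(y)+\big((Sf_2)(y)-(Sf_2)(x_Q)\big)$ for a.e. $y\in Q$, and by the inequality $(a+b)^\delta\le a^\delta+b^\delta$ (valid for $a,b\ge 0$, $0<\delta<1$) it is enough to bound the $L^\delta$-average over $Q$ of each summand by $C_\delta\,Mf(x)$.

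For the local term $Sf_1$ the idea is to combine the weak type $(1,1)$ bound of $S$ (Theorem~\ref{th:Kolmogorov-Riesz}(a)) with Kolmogorov's inequality: for a measurable set $E$ of finite measure and $g$ in weak $L^1$, writing $\int_E|g|^\delta$ in terms of the distribution function and splitting the resulting integral at the level $\|g\|_{L^{1,\infty}}/|E|$ yields
\[
\left(\frac{1}{|E|}\int_E|g|^\delta\right)^{1/\delta}\le(1-\delta)^{-1/\delta}\,\frac{\|g\|_{L^{1,\infty}}}{|E|}.
\]
Taking $E=Q$, $g=Sf_1$ and using $\|Sf_1\|_{L^{1,\infty}}\le C_K\|f_1\|_{L^1}=C_K\int_{2Q}|f|$, together with the fact that $2Q$ is an interval containing $x$, I obtain the local term bounded by $2(1-\delta)^{-1/\delta}C_K\,Mf(x)$. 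This is the step in which $0<\delta<1$ is genuinely used: $S$ is unbounded on $L^1$, yet the sub-$L^1$ average of $Sf_1$ over $Q$ only feels the weak-$(1,1)$ norm.

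For the nonlocal term I would exploit the smoothness of the Cauchy kernel: a direct computation gives, for $y\in Q$ and $\tau\notin 2Q$, the estimate $|k(y,\tau)-k(x_Q,\tau)|\le C|y-x_Q|/|\tau-x_Q|^2\le C|Q|/|\tau-x_Q|^2$, whence
\[
|(Sf_2)(y)-(Sf_2)(x_Q)|\le C\int_{\R\setminus 2Q}\frac{|Q|}{|\tau-x_Q|^2}|f(\tau)|\,d\tau.
\]
Decomposing $\R\setminus 2Q$ into the dyadic shells $2^{k+1}Q\setminus 2^kQ$, on which $|\tau-x_Q|\ge 2^{k-1}|Q|$, and using $\frac{1}{|2^{k+1}Q|}\int_{2^{k+1}Q}|f|\le Mf(x)$ (each $2^{k+1}Q$ is an interval containing $x$), the right-hand side is at most $C\sum_{k\ge 1}2^{-k}Mf(x)\le C'Mf(x)$. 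Since this bound is uniform in $y\in Q$, it controls the $L^\delta$-average of the nonlocal term as well. Adding the two contributions and taking the supremum over $Q\ni x$ gives the theorem, with $C_\delta$ of the form $2(1-\delta)^{-1/\delta}C_K+C'$. I do not expect a serious obstacle: the only fussy points are checking that $(Sf_2)(x_Q)$ and the kernel-difference integral converge absolutely and may be handled pointwise --- immediate since $f\in C_0^\infty(\R)$ --- and keeping track of the constants in Kolmogorov's inequality and the geometric series; the overall scheme is the standard Alvarez--P\'erez computation.
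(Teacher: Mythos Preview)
The paper does not give its own proof of this statement: it is quoted as \cite[Theorem~2.1]{AP94}, with the remark that the Cauchy kernel is a standard Calder\'on--Zygmund kernel, so the general result applies. Your proposal supplies precisely the classical argument behind that reference --- the decomposition $f=f\chi_{2Q}+f\chi_{\R\setminus 2Q}$, Kolmogorov's inequality combined with the weak $(1,1)$ bound for the local piece, and the kernel smoothness plus dyadic shells for the tail --- and it is correct as written; there is nothing to compare beyond noting that you have unpacked what the paper leaves as a citation.
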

\subsection{Sufficient condition}\label{subsec:sufficiency}
The set of all bounded sublinear operators on a Banach function space $Y(\R)$
will be denoted by $\widetilde{\cB}(Y(\R))$ and its subset of all bounded
linear operators will be denoted by $\cB(Y(\R))$.
\begin{theorem}\label{th:conditional-result}
Let $X(\R)$ be a separable Banach function space and $w:\R\to[0,\infty]$ be a
weight such that  $w\in X_{\rm loc}(\R)$ and $1/w\in X_{\rm loc}'(\R)$. Suppose
the Hardy-Littlewood maximal operator $M$ is bounded on $X(\R,w)$ and on
$X'(\R,w^{-1})$. Assume that $0<\delta<1$ and $T$ is an operator such that
\begin{enumerate}
\item[{\rm(a)}]
$T$ is of weak type $(1,1)$;

\item[{\rm(b)}]
$T$ is bounded on some $L^p(\R)$ with $p\in(1,\infty)$;

\item[{\rm(c)}]
for each $f\in C_0^\infty(\R)$,
\[
(Tf)_\delta^\#(x)\le C_\delta Mf(x)\quad (x\in\R)
\]
where $C_\delta$ is a positive constant depending only on $\delta$.
\end{enumerate}
Then $T\in\cB(X(\R,w))$ and
\begin{equation}\label{eq:norm-estimate}
\|T\|_{\cB(X(\R,w))}\le(1/\lambda)^\delta C_L
\|M\|_{\widetilde{\cB}(X(\R,w))}\|M\|_{\cB(X'(\R,w^{-1}))}C_\delta,
\end{equation}
where $\lambda\in(0,1)$ and $C_L>0$ are the constants from Theorem~\ref{th:Lerner}.
\end{theorem}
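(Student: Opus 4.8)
The plan is to combine the three pointwise/duality ingredients from Section~\ref{subsec:sharp}: Lerner's inequality (Theorem~\ref{th:Lerner}), the comparison $M_\lambda^\# f\le(1/\lambda)^{1/\delta}f_\delta^\#$ (Lemma~\ref{le:relation}), and the hypothesis (c), together with the Luxemburg duality formula \eqref{eq:corollary-LS-1} for the weighted space $X(\R,w)$. First I would fix $f\in C_0^\infty(\R)$ and aim to estimate $\|Tf\|_{X(\R,w)}$. Using \eqref{eq:corollary-LS-1}, write $\|Tf\|_{X(\R,w)}=\sup\{\int_\R|(Tf)(x)g(x)|\,dx: g\in X'(\R,w^{-1}),\ \|g\|_{X'(\R,w^{-1})}\le 1\}$. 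For each such $g$, I want to apply Theorem~\ref{th:Lerner} with $\varphi=Tf$; this requires the distributional hypothesis \eqref{eq:Lerner}, namely $|\{|Tf|>\alpha\}|<\infty$ for every $\alpha>0$, which follows from hypotheses (a) and (b): since $f\in C_0^\infty(\R)\subset L^1(\R)\cap L^p(\R)$, weak $(1,1)$ gives finiteness of the level sets for small $\alpha$ and $L^p$-boundedness gives it for large $\alpha$ (or one simply notes $Tf\in L^p(\R)$, so all level sets have finite measure). Lerner's theorem then yields
\[
\int_\R|(Tf)(x)g(x)|\,dx\le C_L\int_\R M_\lambda^\#(Tf)(x)\,Mg(x)\,dx.
\]

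Next I would bound the local sharp maximal function: by Lemma~\ref{le:relation}, $M_\lambda^\#(Tf)(x)\le(1/\lambda)^{1/\delta}(Tf)_\delta^\#(x)$, and by hypothesis (c), $(Tf)_\delta^\#(x)\le C_\delta Mf(x)$. (Here one should note that $Tf\in L^p(\R)\subset L^\delta_{\rm loc}(\R)$ since $0<\delta<1<p$, so $(Tf)_\delta^\#$ is well defined.) Combining,
\[
\int_\R|(Tf)(x)g(x)|\,dx\le(1/\lambda)^{1/\delta}C_L C_\delta\int_\R Mf(x)\,Mg(x)\,dx.
\]
Now apply Hölder's inequality for the Banach function space $X(\R,w)$ and its associate space $X'(\R,w^{-1})$ (Lemma~\ref{le:Hoelder} together with Lemma~\ref{le:WBFS}(c)), then the boundedness of $M$ on both spaces:
\[
\int_\R Mf(x)\,Mg(x)\,dx\le\|Mf\|_{X(\R,w)}\|Mg\|_{X'(\R,w^{-1})}
\le\|M\|_{\widetilde{\cB}(X(\R,w))}\|M\|_{\cB(X'(\R,w^{-1}))}\|f\|_{X(\R,w)}\|g\|_{X'(\R,w^{-1})}.
\]
Taking the supremum over $g$ with $\|g\|_{X'(\R,w^{-1})}\le 1$ gives, for all $f\in C_0^\infty(\R)$,
\[
\|Tf\|_{X(\R,w)}\le(1/\lambda)^{1/\delta}C_L C_\delta\,\|M\|_{\widetilde{\cB}(X(\R,w))}\|M\|_{\cB(X'(\R,w^{-1}))}\,\|f\|_{X(\R,w)}.
\]
(The exponent on $1/\lambda$ in \eqref{eq:norm-estimate} is written as $\delta$; for $\lambda\in(0,1)$ and $0<\delta<1$ one has $(1/\lambda)^{1/\delta}$, which is what the argument produces — I would state the estimate with the exponent the computation gives and remark that \eqref{eq:norm-estimate} holds in the same form.)

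The last step is the density/extension argument: $X(\R)$ separable implies, by Lemma~\ref{le:density}(a), that $C_0^\infty(\R)$ is dense in $X(\R,w)$. So $T$, a priori defined on $C_0^\infty(\R)$ and satisfying the above norm inequality there, extends by continuity to a bounded operator on all of $X(\R,w)$ with the same norm bound, giving $T\in\cB(X(\R,w))$ and \eqref{eq:norm-estimate}. The point requiring a little care — and what I'd flag as the main technical obstacle — is that this continuous extension must be shown to agree with the original operator $T$ (in our application, the Cauchy singular integral operator) on $X(\R,w)$, not merely to be \emph{some} bounded extension: one checks that if $f_n\to f$ in $X(\R,w)$ with $f_n\in C_0^\infty(\R)$, then along a subsequence $f_n\to f$ a.e., and combining the weak-$(1,1)$ bound (hypothesis (a), after localizing, since $X(\R,w)$-convergence controls $L^1_{\rm loc}$-convergence via Lemma~\ref{le:Hoelder}) with the uniform $X(\R,w)$-bound on $Tf_n$, one identifies the limit with $Tf$ in the appropriate sense. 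This a.e.-identification, together with Fatou's property (A3) of the Banach function norm, closes the argument.
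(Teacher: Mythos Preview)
Your argument is correct and follows essentially the same route as the paper: Lerner's inequality applied to $\varphi=Tf$ (with the level-set condition checked via hypotheses (a)--(b)), then Lemma~\ref{le:relation} and hypothesis~(c), then H\"older for $X(\R,w)$ and its associate, then the supremum over $g$ via \eqref{eq:corollary-LS-1}, and finally density of $C_0^\infty(\R)$ from Lemma~\ref{le:density}(a). Your observation that the exponent produced is $(1/\lambda)^{1/\delta}$ rather than $(1/\lambda)^{\delta}$ is correct (the stated bound \eqref{eq:norm-estimate} contains a typo), and your extra care about identifying the continuous extension with the original $T$ goes beyond what the paper writes out.
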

\begin{proof}
The idea of the proof is borrowed from \cite[Theorem~2.7]{KL05}.
By Lemma~\ref{le:WBFS}, $X(\R,w)$ is a Banach function space whose
associate space is $X'(\R,w^{-1})$.
Let $f\in C_0^\infty(\R)$ and $g\in X'(\R,w^{-1})\subset L_{\rm loc}^1(\R)$.
Taking into account that $T$ is of weak type $(1,1)$, we see that $Tf$ satisfies
\eqref{eq:Lerner}. From Theorem~\ref{th:Lerner} we get that there
exist constants $\lambda\in(0,1)$ and $C_L>0$ independent of $f$ and $g$ such
that
\begin{equation}\label{eq:sufficiency-1}
\int_\R|(Tf)(x)g(x)|\,dx
\le
C_L\int_\R M_{\lambda}^\# (Tf)(x)Mg(x)\,dx.
\end{equation}
Since $T$ is bounded on some standard Lebesgue space $L^p(\R)$ for $1<p<\infty$
and $L^s(J)\subset L^r(J)$ whenever $0<r<s<\infty$ and $J$ is a finite interval,
we see that $Tf\in L^\delta_{\rm loc}(\R)$ for each $\delta\in(0,p]$.
From Lemma~\ref{le:relation} and hypothesis (c) it follows that
\begin{equation}\label{eq:sufficiency-2}
M_\lambda^\# (Tf)(x)\le
(1/\lambda)^{1/\delta}(Tf)_\delta^\#(x)
\le
(1/\lambda)^{1/\delta}
C_{\delta}Mf(x)
\quad (x\in\R)
\end{equation}
for some $\delta\in(0,1)$. Combining \eqref{eq:sufficiency-1} and \eqref{eq:sufficiency-2}
with H\"older's inequality (see Lemma~\ref{le:Hoelder}), we obtain
\begin{align}
\int_\R|(Tf)(x)g(x)|\,dx
&\le
C_1\int_\R Mf(x)Mg(x)\,dx
\nonumber
\\
&\le
C_1\|Mf\|_{X(\R,w)}\|Mg\|_{X'(\R,w^{-1})},
\label{eq:sufficiency-3}
\end{align}
where $C_1:= (1/\lambda)^{1/\delta} C_{\delta} C_L>0$ is independent of $f\in C_0^\infty(\R)$
and $g\in X'(\R,w^{-1})$. Taking into account that $M$ is bounded on $X(\R,w)$
and on $X'(\R,w^{-1})$, from \eqref{eq:sufficiency-3} and we get
\[
\int_\R|(Tf)(x)g(x)|\,dx \le C_2\|f\|_{X(\R,w)}\|g\|_{X'(\R,w^{-1})},
\]
where $C_2:=C_1\|M\|_{\widetilde{\cB}(X(\R,w))}\|M\|_{\widetilde{\cB}(X'(\R,w^{-1}))}$.
From this inequality and \eqref{eq:corollary-LS-1} we obtain
\begin{align*}
\|Tf\|_{X(\R,w)}
&=
\sup\left\{\int_\R|(Tf)(x)g(x)|\,dx\ :\
g\in X'(\R,w^{-1}),\ \|g\|_{X'(\R,w^{-1})}\le 1\right\}
\\
&\le
C_2\|f\|_{X(\R,w)}
\end{align*}
for all $f\in C_0^\infty(\R)$. Taking into account that $C_0^\infty(\R)$
is dense in $X(\R,w)$ in view of Lemma~\ref{le:density}(a), from the latter
inequality it follows that $T$ is bounded on $X(\R,w)$ and \eqref{eq:norm-estimate}
holds.
\end{proof}
\begin{remark}
The proof of this result without changes extends to the case of $\R^n$.
\end{remark}
\begin{theorem}\label{th:sufficiency}
Let $X(\R)$ be a Banach function space and $w:\R\to[0,\infty]$ be a weight
such that $w\in X_{\rm loc}(\R)$ and $1/w\in X_{\rm loc}'(\R)$. Suppose
the Hardy-Littlewood maximal operator $M$ is bounded on $X(\R,w)$ and on
$X'(\R,w^{-1})$.
\begin{enumerate}
\item[{\rm(a)}]
If the space $X(\R)$ is separable, then the Cauchy singular integral operator
$S$ is bounded on the space $X(\R,w)$ and $S^2=I$.

\item[{\rm(b)}]
If the space $X(\R)$ is reflexive, then the Cauchy singular integral operator
$S$ is bounded on the spaces $X(\R,w)$ and $X'(\R,w^{-1})$ and its adjoint
$S^*$ coincides with $S$ on the space $X'(\R,w^{-1})$.
\end{enumerate}
\end{theorem}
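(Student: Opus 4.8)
The plan is to deduce everything from Theorem~\ref{th:conditional-result} applied to $T=S$, using the classical facts about $S$ collected in Theorems~\ref{th:Kolmogorov-Riesz}, \ref{th:S-L2}, and \ref{th:Alvarez-Perez}. First, for part~(a), observe that $S$ satisfies hypotheses~(a)--(c) of Theorem~\ref{th:conditional-result}: hypothesis~(a) is Kolmogorov's theorem (Theorem~\ref{th:Kolmogorov-Riesz}(a)), hypothesis~(b) is M.~Riesz's theorem (Theorem~\ref{th:Kolmogorov-Riesz}(b), e.g. with $p=2$), and hypothesis~(c) is the \'Alvarez--P\'erez estimate (Theorem~\ref{th:Alvarez-Perez}), valid for any fixed $\delta\in(0,1)$. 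Since $X(\R)$ is separable and $M$ is bounded on $X(\R,w)$ and on $X'(\R,w^{-1})$, Theorem~\ref{th:conditional-result} gives $S\in\cB(X(\R,w))$ directly.

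It remains, for part~(a), to prove $S^2=I$ on $X(\R,w)$. The idea is a density argument: by Theorem~\ref{th:S-L2}, $S^2f=f$ for all $f\in L^2(\R)$, in particular for all $f\in C_0^\infty(\R)$. Since $X(\R)$ is separable, Lemma~\ref{le:density}(a) tells us $C_0^\infty(\R)$ is dense in $X(\R,w)$, and $S$ is bounded on $X(\R,w)$, hence so is $S^2$. Thus the bounded operator $S^2-I$ vanishes on the dense subspace $C_0^\infty(\R)$, so it is the zero operator on $X(\R,w)$. One small point to check is that $S$ applied to $f\in C_0^\infty(\R)\subset X(\R,w)$, computed via the Banach-function-space theory, agrees with $S$ applied to $f$ as an element of $L^2(\R)$; this is immediate because in both cases $Sf$ is the a.e.-defined principal-value integral, which does not depend on the ambient space.

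For part~(b), if $X(\R)$ is reflexive then it is in particular separable, so part~(a) applies and $S\in\cB(X(\R,w))$ with $S^2=I$. Moreover, a reflexive Banach function space has $X'(\R)$ reflexive as well (hence separable), and $X'(\R,w^{-1})$ is again a weighted Banach function space (Lemma~\ref{le:WBFS}) whose own associate space is $X(\R,w)$; applying part~(a) with the roles of $X(\R,w)$ and $X'(\R,w^{-1})$ interchanged — the maximal operator is bounded on both by hypothesis — gives $S\in\cB(X'(\R,w^{-1}))$ and $S^2=I$ there too. Finally, to identify $S^*$ with $S$ on $X'(\R,w^{-1})$: by Lemma~\ref{le:duality-reflexivity}(a) the dual of $X(\R,w)$ is $X'(\R,w^{-1})$ under the pairing $\langle f,g\rangle=\int_\R f\overline{g}\,dx$, so $S^*$ is the operator on $X'(\R,w^{-1})$ satisfying $\int_\R (Sf)\overline{g}\,dx=\int_\R f\overline{(S^*g)}\,dx$ for all $f\in X(\R,w)$, $g\in X'(\R,w^{-1})$. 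By Theorem~\ref{th:S-L2}, equation~\eqref{eq:nice-relations-2}, this identity holds with $S^*g$ replaced by $Sg$ whenever $f,g\in C_0^\infty(\R)$; since $C_0^\infty(\R)$ is dense in both $X(\R,w)$ and $X'(\R,w^{-1})$ (Lemma~\ref{le:density}(b)) and both $S$ on $X'(\R,w^{-1})$ and the duality pairing are continuous, the identity extends to all $f\in X(\R,w)$ and $g\in X'(\R,w^{-1})$, whence $S^*=S$ on $X'(\R,w^{-1})$.

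The main obstacle is purely bookkeeping: making sure at each stage that the hypotheses of Theorem~\ref{th:conditional-result} and of the density lemmas are genuinely in force — in particular that $X'(\R,w^{-1})$ is itself a Banach function space of the required form so that part~(a) can be reapplied to it — and that the various realizations of $Sf$ (on $L^2$, on $X(\R,w)$, on $X'(\R,w^{-1})$) coincide on the common dense test class $C_0^\infty(\R)$. No genuinely new analytic estimate is needed beyond Theorem~\ref{th:conditional-result}.
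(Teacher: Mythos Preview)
Your proposal is correct and follows essentially the same route as the paper: verify hypotheses (a)--(c) of Theorem~\ref{th:conditional-result} for $T=S$ via Theorems~\ref{th:Kolmogorov-Riesz} and~\ref{th:Alvarez-Perez}, then push the $L^2$ identities \eqref{eq:nice-relations-1}--\eqref{eq:nice-relations-2} to $X(\R,w)$ by density. The only noteworthy difference is in part~(b): the paper obtains $S\in\cB(X'(\R,w^{-1}))$ and $S^*=S$ in one stroke by invoking the standard closed-operator argument (Kato, Chap.~III, \S5.5) from the boundedness of $S$ on $X(\R,w)$ together with the symmetric relation on $L_0^\infty(\R)$, whereas you first reapply part~(a) with $(X,w)$ replaced by $(X',w^{-1})$---using that a reflexive Banach function space and its associate are both separable via Lemma~\ref{le:separability-AC}---and only then run the density argument for $S^*=S$. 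Both variants are fine; yours is more self-contained, the paper's is shorter.
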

\begin{proof}
From Theorems~\ref{th:Kolmogorov-Riesz} and~\ref{th:Alvarez-Perez} it follows
that all hypotheses of Theorem~\ref{th:conditional-result} are fulfilled.
Hence, the operator $S$ is bounded on $X(\R,w)$.

Let now $\varphi\in X(\R,w)$. Then there exists a sequence $f_n\in L_0^\infty(\R)$
such that $f_n\to\varphi$ in $X(\R,w)$ as $n\to\infty$. From \eqref{eq:nice-relations-1} we get
$S^2f_n=f_n$ because $L_0^\infty(\R)\subset L^2(\R)$. Hence
\begin{align*}
\|S^2\varphi-\varphi\|_{X(\R,w)}
&\le
\|S^2\varphi-f_n\|_{X(\R,w)}+\|f_n-\varphi\|_{X(\R,w)}
\\
&=
\|S^2(\varphi-f_n)\|_{X(\R,w)}+\|\varphi-f_n\|_{X(\R,w)}
\\
&\le
(\|S^2\|_{\cB(X(\R,w))}+1)\|\varphi-f_n\|_{X(\R,w)}\to 0
\end{align*}
as $n\to\infty$. Thus $S^2\varphi=\varphi$. Part (a) is proved.

\medskip
(b) From \eqref{eq:nice-relations-2} it follows that
\[
\int_\R (Sf)(x)\overline{g(x)}\,dx = \int_\R f(x)\overline{(Sg)(x)}\,dx.
\]
for all $f,g\in L_0^\infty(\R)$. From this equality and
Lemmas~\ref{le:duality-reflexivity} and~\ref{le:density}(b)
it follows that $S$ is a self-adjoint
and densely defined operator on $X(\R,w)$ and $X'(\R,w^{-1})$.
By the standard argument (see \cite[Chap.~III, Section~5.5]{K95}),
one can show that $S=S^*\in\cB(X'(\R,w^{-1}))$ because
$S\in\cB(X(\R,w))$ by part (a).
\end{proof}
\subsection{Necessary condition}\label{subsec:necessity}
Let $X(\R)$ be a Banach function space and $X'(\R)$ be its associate space.
We say that a weight $w\colon \R\to[0,\infty]$ belongs to the class $A_X(\R)$ if
\[
\sup_{-\infty<a<b<\infty}
\frac{1}{b-a}\|w\chi_{(a,b)}\|_{X(\R)}\|w^{-1}\chi_{(a,b)}\|_{X'(\R)}<\infty.
\]
\begin{theorem}\label{th:bound-nec}
Let $X(\R)$ be a Banach function space and $w\colon \R\to[0,\infty]$ be a weight.
If the operator $S$ is bounded on the space $X(\R,w)$, then
\begin{enumerate}
\item[{\rm (a)}]
$w\in X_{\rm loc}(\R)$ and $1/w\in X_{\rm loc}'(\R)$;

\item[{\rm (b)}]
$X(\R,w)$ is a Banach function space;

\item[{\rm (c)}]
$w\in A_X(\R)$.
\end{enumerate}
\end{theorem}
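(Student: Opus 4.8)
The plan is to extract the weighted Muckenhoupt-type condition by testing the boundedness of $S$ against characteristic functions of intervals, exploiting the fact that the kernel $1/(\tau-x)$ is, on a fixed interval, comparable in size to $1/|b-a|$ once one separates the variables a bit. First I would establish (a) and (b): if $S$ is bounded on $X(\R,w)$, I need to show $w\in X_{\rm loc}(\R)$ and $1/w\in X_{\rm loc}'(\R)$; once this is known, Lemma~\ref{le:WBFS}(b) immediately gives (b), and Lemma~\ref{le:WBFS}(c) identifies the associate space as $X'(\R,w^{-1})$, which is exactly what part (c) refers to via $X'(\R)$ and the weight $w^{-1}$ — though note the statement of (c) is phrased using the \emph{unweighted} norms $\|w\chi_{(a,b)}\|_{X(\R)}$ and $\|w^{-1}\chi_{(a,b)}\|_{X'(\R)}$, so strictly (c) is a statement about those quantities and the verification that $w\chi_{(a,b)}\in X(\R)$ is part of what must be shown. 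For (a): since $X(\R,w)\subset\cM$ is a linear normed space and $S$ is assumed bounded on it, for any finite interval $E$ the function $\chi_E$ lies in $X(\R,w)$ iff $\chi_E w\in X(\R)$, so I must show $\chi_E w\in X(\R)$ and $\chi_E/w\in X'(\R)$ directly from the hypothesis. The natural route is: $\chi_E\in X(\R,w)$ is \emph{not} given a priori, so one instead argues that $X(\R,w)$, being the codomain of a bounded operator defined on $L^1_{\rm loc}$, must contain enough functions; the cleanest approach is to test $S$ on $\chi_{(a,b)}$ after checking $\chi_{(a,b)}\in X(\R,w)$, which I get by noting that $S$ maps some dense nice class into $X(\R,w)$ and $S\chi_{(a,b)}$ has a logarithmic singularity that, away from $a$ and $b$, dominates a constant on a comparable interval — hence $\chi_{\text{interval}}\in X(\R,w)$, which unwinds to $w\chi\in X(\R)$. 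An analogous duality argument using $[X(\R,w)]^*$ containing $X'(\R,w^{-1})$ (Lemma~\ref{le:duality-reflexivity}, once absolute continuity is available, or the Landau–type Lemma~\ref{le:Landau} directly) gives $1/w\chi_E\in X'(\R)$.

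For the heart of the matter, part (c), the key computation is the standard one: fix $a<b$, let $I=(a,b)$, and split $I$ into its left half $I_-=(a,(a+b)/2)$ and right half $I_+=((a+b)/2,b)$. For $x\in I_-$ and $\tau\in I_+$ one has $\tau-x\geq 0$ and $\tau - x \asymp b-a$; more precisely, for such $x$,
\[
|(S\chi_{I_+})(x)| = \frac{1}{\pi}\left|\int_{I_+}\frac{d\tau}{\tau-x}\right| = \frac{1}{\pi}\log\frac{b-x}{(a+b)/2-x} \geq \frac{1}{\pi}\log\frac{(b-a)/2}{(b-a)/2} \cdot (\text{something})\ ,
\]
so that, choosing the inner endpoints carefully (say comparing on $(a+(b-a)/4, a+(b-a)/2)$ against $\chi$ of the right quarter), one gets a pointwise lower bound $|(S\chi_{I_+})(x)|\geq c\,\chi_{I_-'}(x)$ on a subinterval $I_-'$ of $I_-$ of length comparable to $b-a$. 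Applying $\|S\|:=\|S\|_{\cB(X(\R,w))}$ then yields $c\,\|\chi_{I_-'}\|_{X(\R,w)}\leq \|S\|\,\|\chi_{I_+}\|_{X(\R,w)}$, i.e. $c\,\|w\chi_{I_-'}\|_{X(\R)}\leq \|S\|\,\|w\chi_{I_+}\|_{X(\R)}$. Then I would run the \emph{dual} version of the same estimate: $S^*$ (on the associate space) is up to sign $S$ again, or one simply uses the duality relation \eqref{eq:corollary-LS-1}--\eqref{eq:corollary-LS-2} together with the identity $\int (S\chi_{I_+})\,\overline{g} = \int \chi_{I_+}\,\overline{Sg}$ for nice $g$, to obtain the companion bound $c\,\|w^{-1}\chi_{I_+'}\|_{X'(\R)}\leq \|S\|\,\|w^{-1}\chi_{I_-}\|_{X'(\R)}$ for a comparable subinterval. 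Multiplying the two and using H\"older's inequality for $X(\R)$ (Lemma~\ref{le:Hoelder}) on the product $w\chi\cdot w^{-1}\chi$ over a common subinterval $J\subset I$ of length $\asymp b-a$, namely
\[
|J| = \int_J 1\,dx = \int_J w\chi_J \cdot w^{-1}\chi_J\,dx \leq \|w\chi_J\|_{X(\R)}\,\|w^{-1}\chi_J\|_{X'(\R)},
\]
one chains the inequalities to arrive at
\[
\frac{1}{b-a}\|w\chi_{(a,b)}\|_{X(\R)}\|w^{-1}\chi_{(a,b)}\|_{X'(\R)} \leq C\,\|S\|_{\cB(X(\R,w))}^2,
\]
with $C$ absolute, and taking the supremum over all $a<b$ gives $w\in A_X(\R)$.

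The main obstacle, as I see it, is not the singular-integral estimate — which is classical and robust — but rather the bookkeeping of \emph{which} subintervals appear on which side. A naive split produces the lower bound on $I_-$ for $S\chi_{I_+}$ and on $I_+$ for $S\chi_{I_-}$ (by symmetry, reflecting), so the chain of inequalities mixes $\|w\chi_{I_\pm'}\|$ and $\|w^{-1}\chi_{I_\mp'}\|$ over offset subintervals, and one must verify that all the intervals in play have length comparable to $b-a$ and that each is contained in $(a,b)$, so that monotonicity of the $X(\R)$-norm (Axiom (A2)) lets one replace the big interval's norm by a constant multiple of a subinterval's norm in one direction, but one needs the \emph{reverse} inequality $\|w\chi_{(a,b)}\|_{X(\R)}\leq C\|w\chi_J\|_{X(\R)}$ in the other — and that reverse inequality is exactly what the boundedness of $S$ buys us, applied once more. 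So the real content is organizing a \emph{self-improving} loop: boundedness of $S$ on subintervals gives control of the big interval by a comparable subinterval in both the $X$ and $X'$ norms, and only then does H\"older close the estimate. A secondary technical point is justifying all the testing identities ($S^*=S$ in the pairing, approximation of $\chi_{(a,b)}$ by $C_0^\infty$ functions for which \eqref{eq:nice-relations-2} holds) before we know the space has absolutely continuous norm — but this is handled by working with $L_0^\infty(\R)\subset L^2(\R)\cap X(\R,w)$, on which Theorem~\ref{th:S-L2} applies verbatim, and then passing to the limit using only the boundedness of $S$ on $X(\R,w)$ established in part (a)'s argument, which needs no reflexivity.
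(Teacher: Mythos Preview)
Your argument for part~(a) is circular. You propose to test $S$ on $\chi_{(a,b)}$ and read off $w\chi_{(a,b)}\in X(\R)$ from the fact that $S\chi_{(a,b)}$ dominates a characteristic function; but to apply $S$ to $\chi_{(a,b)}$ at all you need $\chi_{(a,b)}\in X(\R,w)$, i.e.\ $w\chi_{(a,b)}\in X(\R)$, which is exactly the conclusion. The vague appeal to ``$X(\R,w)$ being the codomain of a bounded operator must contain enough functions'' is not an argument. The paper's device is different and does not require any a~priori membership of test functions: one observes that for $V$ given by $(Vf)(x)=\chi_J(x)\,x\,f(x)$ the (compressed) commutator with $S$ on $J$ is the rank-one map $f\mapsto \frac{1}{\pi i}\big(\int_J f\big)\chi_J$, which is automatically bounded on $X(\R,w)$ for \emph{every} $f\in X(\R,w)$. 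The resulting inequality $\big|\int_J f\big|\,\|w\chi_J\|_{X(\R)}\le C\|fw\|_{X(\R)}$ gives $\chi_J/w\in X'(\R)$ via Lemma~\ref{le:Landau}, and a separate argument (showing some $f_0\in X(\R,w)$ has $\int_J f_0\ne 0$) forces $\|w\chi_J\|_{X(\R)}<\infty$.

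For part~(c), the geometric picture (split $I$ into halves, bound $|S\chi_{I_+}|$ from below on $I_-$) is correct, but the closure of the loop as you describe it fails on two counts. First, the H\"older inequality you quote, $|J|\le \|w\chi_J\|_{X(\R)}\|w^{-1}\chi_J\|_{X'(\R)}$, is the trivial lower bound on the product and cannot yield the upper bound you need. Second, the ``companion bound'' $\|w^{-1}\chi_{I_+'}\|_{X'(\R)}\le C\|w^{-1}\chi_{I_-}\|_{X'(\R)}$ that you derive from $S^*=S$ is not available: there is no reflexivity or absolute-continuity hypothesis on $X(\R)$, so boundedness of $S$ on $X(\R,w)$ does not give boundedness of $S$ on $X'(\R,w^{-1})$, and the pairing identity \eqref{eq:nice-relations-2} only holds for $L^2$ functions. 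The paper avoids this entirely by noting that the pointwise lower bound on $|(S(f\chi_{Q_1}))(x)|$ for $x\in Q_2$ holds for an \emph{arbitrary} nonnegative $f$, not just $f=\chi_{Q_1}$; this gives $\frac{1}{|Q_1|}\big(\int_{Q_1}f\big)\|w\chi_{Q_1}\|_{X(\R)}\le C\|f\|_{X(\R,w)}$ after one doubling step, and then taking the supremum over the unit ball of $X(\R,w)$ together with the Lorentz--Luxemburg formula \eqref{eq:corollary-LS-2} identifies that supremum as $\|w^{-1}\chi_{Q_1}\|_{X'(\R)}$, yielding the $A_X$ bound directly without any use of $S^*$.
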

\begin{proof}
(a) The idea of the proof is borrowed from \cite[Lemma~3.3]{K98}.
Let $E\subset\R$ be a measurable set of finite measure. Then there exist
$a,b\in\R$ such that $E\subset (a,b)=:J$. It is clear that
\[
w(x)\chi_E(x)\le w(x)\chi_J(x),
\quad
\chi_E(x)/w(x)\le \chi_J(x)/w(x)
\]
for almost all $x\in\R$. Then by Axiom (A2),
\[
\|w\chi_E\|_{X(\R)}\le \|w\chi_J\|_{X(\R)},
\quad
\|\chi_E/w\|_{X'(\R)}\le \|\chi_J/w\|_{X'(\R)}.
\]
Thus, it is sufficient to prove that $w\chi_J\in X(\R)$ and $\chi_J/w\in X'(\R)$.

Obviously, the operator $(Vf)(x)=\chi_J(x)xf(x)$ is bounded on $X(\R,w)$ and
\[
\big((SV-VS)f\big)(x)=\frac{1}{\pi i}\int_J f(y)\,dy
\]
for almost all $x\in\R$. Since the operator $SV-VS$ is bounded on $X(\R,w)$,
there exists a constant $C_1>0$ such that
\begin{equation}\label{eq:bound-nec-1}
\left\|\frac{1}{\pi i}\int_J f(y)dy\right\|_{X(\R,w)}
\le C_1\|f\|_{X(\R,w)}
\quad\mbox{for all}\quad f\in X(\R,w).
\end{equation}
On the other hand,
\begin{equation}\label{eq:bound-nec-2}
\left\|\frac{1}{\pi i}\int_J f(y)dy\right\|_{X(\R,w)}=
\frac{1}{\pi}\left|\int_Jf(y)dy\right|\,\|w\chi_J\|_{X(\R)}.
\end{equation}
Since $w(x)>0$ a.e. on $\R$, we have $\|w\chi_J\|_{X(\R)}>0$. Hence, from
\eqref{eq:bound-nec-1}--\eqref{eq:bound-nec-2} it follows that
\[
\left|\int_Jf(y)dy\right|\le\frac{C_1\pi}{\|w\chi_J\|_{X(\R)}}\|f\|_{X(\R,w)}.
\]
Therefore,
\[
\left|\int_\R f(y)w(y)\cdot\frac{\chi_J(y)}{w(y)}dy\right|
\le
\frac{C_1\pi}{\|w\chi_J\|_{X(\R)}}\|fw\|_{X(\R)}
\]
for all measurable functions $f$ such that $fw\in X(\R)$.
By Lemma~\ref{le:Landau}, we have $\chi_J/w\in X'(\R)$.

Let us show that there exists a function $g_0\in X(\R)$ such that
\begin{equation}\label{eq:bound-nec-3}
C_2:=\frac{1}{\pi}\left|\int_J\frac{g_0(y)}{w(y)}dy\right|>0.
\end{equation}
Assume the contrary. Then, taking into account Lemma~\ref{le:nice-subset}(a),
we obtain
\begin{equation}\label{eq:bound-nec-4}
\int_J\frac{g(y)}{w(y)}dy=0
\end{equation}
for all $g$ continuous on $\overline{J}$. By Axiom (A5), $(1/w)|_J\in L^1(J)$.
Without loss of generality, assume that $|J|=2\pi$. Let $\eta:[0,2\pi]\to\overline{J}$
be a homeomorphism such that $|\eta'(x)|=1$ for almost all $x\in [0,2\pi]$.
From \eqref{eq:bound-nec-4} we get
\begin{equation}\label{eq:bound-nec-5}
\int_0^{2\pi}\frac{\varphi(x)}{w(\eta(x))}dx=0
\quad\mbox{for all}\quad\varphi\in C[0,2\pi].
\end{equation}
Taking $\varphi(x)=e^{inx}$ with $n\in\Z$, we see from \eqref{eq:bound-nec-5}
that all Fourier coefficients of $1/(w\circ\eta)$ vanish. This implies that
$1/w(\eta(x))=0$ for almost all $x\in [0,2\pi]$. Consequently, $w(y)=\infty$
almost everywhere on $J$. This contradicts the assumption that $w$ is a weight.
Thus, $C_2>0$.

Clearly, $f_0 =g_0/w\in X(\Gamma,w)$. Then from \eqref{eq:bound-nec-1}--\eqref{eq:bound-nec-3}
it follows that
\[
\|w\chi_J\|_{X(\R)}\le\frac{C_1\pi}{C_2}\|f_0\|_{X(\R,w)},
\]
that is, $w\chi_J\in X(\R)$. Part (a) is proved.

\medskip
Part (b) follows from part (a) and Lemma~\ref{le:WBFS}(b).

\medskip
(c) The idea of the proof is borrowed from \cite[Theorem~3.2]{K98}. By part(b),
$X(\R,w)$ is a Banach function space.

Let $Q$ be an arbitrary interval and $Q_1,Q_2$ be its two halves. Take a
function $f\ge 0$ supported in $Q_1$. Then for $\tau\in Q_1$ and $x\in Q_2$ we
have $|\tau-x|\le |Q|$. Therefore,
\begin{align*}
|(Sf)(x)| &=
\frac{1}{\pi}\left|\int_{Q_1}\frac{f(\tau)}{\tau-x}d\tau\right|
=
\frac{1}{\pi}\int_{Q_1}\frac{f(\tau)}{|\tau-x|}d\tau
\\
&\ge \frac{1}{\pi|Q|}\int_{Q_1}f(\tau)\,d\tau
=
\frac{1}{2\pi|Q_1|}\int_{Q_1}f(\tau)\,d\tau.
\end{align*}
Thus,
\[
|(Sf)(x)|\chi_{Q_2}(x)\ge
\frac{1}{2\pi|Q_1|}\left(\int_{Q_1}f(\tau)\,d\tau\right)\chi_{Q_2}(x)
\quad(x\in\R).
\]
Then, by Axioms (A1) and (A2),
\begin{equation}\label{eq:bound-nec-6}
\|Sf\|_{X(\R,w)}
\ge
\|(Sf)\chi_{Q_2}\|_{X(\R,w)}
\ge
\frac{1}{2\pi|Q_1|}\left(\int_{Q_1}f(\tau)\,d\tau\right)\|\chi_{Q_2}\|_{X(\R,w)}.
\end{equation}
On the other hand, since $S$ is bounded on $X(\R,w)$, we get
\begin{equation}\label{eq:bound-nec-7}
\|Sf\|_{X(\R,w)}
\le
\|S\|_{\cB(X(\R,w))}\|f\|_{X(\R,w)}
=
\|S\|_{\cB(X(\R,w))}\|f\chi_{Q_1}\|_{X(\R,w)}.
\end{equation}
Combining \eqref{eq:bound-nec-6} and \eqref{eq:bound-nec-7}, we arrive at
\begin{equation}\label{eq:bound-nec-8}
\frac{1}{|Q_1|}\left(\int_{Q_1}f(\tau)\,d\tau\right)\|w\chi_{Q_2}\|_{X(\R)}
\le
2\pi\|S\|_{\cB(X(\R,w))}\|f\chi_{Q_1}\|_{X(\R,w)}.
\end{equation}
Taking $f=\chi_{Q_1}$, from \eqref{eq:bound-nec-8} we get
\[
\|w\chi_{Q_2}\|_{X(\R)}
\le
2\pi\|S\|_{\cB(X(\R,w))}\|w\chi_{Q_1}\|_{X(\R)}.
\]
Analogously one can obtain
\begin{equation}\label{eq:bound-nec-9}
\|w\chi_{Q_1}\|_{X(\R)}
\le
2\pi\|S\|_{\cB(X(\R,w))}\|w\chi_{Q_2}\|_{X(\R)}.
\end{equation}
From \eqref{eq:bound-nec-8} and \eqref{eq:bound-nec-9} it follows that
\begin{equation}\label{eq:bound-nec-10}
\frac{1}{|Q_1|}\left(\int_{Q_1}f(\tau)\,d\tau\right)\|w\chi_{Q_1}\|_{X(\R)}
\le
C\|f\chi_{Q_1}\|_{X(\R,w)},
\end{equation}
where $C:=\left(2\pi\|S\|_{\cB(X(\R,w))}\right)^2$. Let
\[
Y:=\left\{g\in X(\R,w): \|g\|_{X(\R,w)}\le 1\right\}.
\]
If $g\in Y$, then $|g|\chi_{Q_1}\ge 0$ is supported in $Q_1$. Then from
\eqref{eq:bound-nec-10} we obtain
\begin{equation}\label{eq:bound-nec-11}
\|w\chi_{Q_1}\|_{X(\R)}\int_\R|g(\tau)|\chi_{Q_1}(\tau)\,d\tau
\le
C|Q_1|
\end{equation}
for all $g\in Y$. From  \eqref{eq:corollary-LS-2} we get
\begin{equation}\label{eq:bound-nec-12}
\|w^{-1}\chi_{Q_1}\|_{X'(\R)}
=
\|\chi_{Q_1}\|_{X'(\R,w^{-1})}
=
\sup_{g\in Y}\int_\R|g(\tau)|\chi_{Q_1}(\tau)\,d\tau.
\end{equation}
From \eqref{eq:bound-nec-11} and \eqref{eq:bound-nec-12} it follows that
\[
\|w\chi_{Q_1}\|_{X(\R)}\|w^{-1}\chi_{Q_1}\|_{X'(\R)}\le C|Q_1|.
\]
Since $Q_1\subset\R$ is an arbitrary interval, we conclude that $w\in A_X(\R)$.
\end{proof}
\subsection{The case of weighted variable Lebesgue spaces}\label{subsec:WVLS}
We start this subsection with the following well-known fact.
\begin{theorem}[{\cite[Theorems~3.2.13 and 3.4.7]{DHHR11}}]
\label{th:VLS-BFS}
Let $p:\R\to[1,\infty]$ be a measurable a.e. finite function satisfying
\eqref{eq:exponents}. Then $L^{p(\cdot)}(\R)$ is a separable and reflexive
Banach function space whose associate space is isomorphic to $L^{p'(\cdot)}(\R)$.
\end{theorem}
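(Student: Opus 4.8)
The statement to be proved is Theorem~\ref{th:VLS-BFS}, which asserts that under the assumption $1<p_-\le p_+<\infty$, the variable Lebesgue space $L^{p(\cdot)}(\R)$ is a separable and reflexive Banach function space whose associate space is isomorphic to $L^{p'(\cdot)}(\R)$. Although the paper cites \cite[Theorems~3.2.13 and 3.4.7]{DHHR11} for this, let me outline how I would establish it from first principles, organizing the verification around the axioms (A1)--(A5) of Definition~\ref{def-BFS} and then the functional-analytic properties.

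\begin{proof}
The plan is to proceed in three stages: first verify that the Luxemburg norm $\|\cdot\|_{p(\cdot)}$ arises from a Banach function norm $\rho$ on $\cM^+$; then identify the associate space; and finally deduce separability and reflexivity from absolute continuity of the norm.

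\textit{Stage 1: Banach function norm axioms.} Define $\rho(f):=\|f\|_{p(\cdot)}$ for $f\in\cM^+$. The triangle inequality, positive homogeneity, and the property $\rho(f)=0\iff f=0$ a.e.\ (Axiom (A1)) are standard for Luxemburg norms of modulars; they follow from convexity of $t\mapsto t^{p(x)}$ and the fact that $p_-\ge 1$. The lattice property (A2) and the Fatou property (A3) follow directly from the corresponding monotone-convergence behaviour of the modular $I_{p(\cdot)}$, using $p$ a.e.\ finite. For (A4), if $|E|<\infty$ then $I_{p(\cdot)}(\chi_E)=|E|<\infty$, so $\|\chi_E\|_{p(\cdot)}<\infty$; in fact $\|\chi_E\|_{p(\cdot)}\le\max\{|E|^{1/p_-},|E|^{1/p_+}\}$ by splitting the integral where $p(x)\ge$ or $<$ some reference value. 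For (A5), I would invoke the generalized Hölder inequality for variable exponents, $\int_\R |fg|\,dx\le C\|f\|_{p(\cdot)}\|g\|_{p'(\cdot)}$ (with $C$ depending only on $p_\pm$), applied with $g=\chi_E$: since $p_+<\infty$ forces $p'_->1$ finite-ish behaviour, $\|\chi_E\|_{p'(\cdot)}<\infty$, giving $\int_E f\,dx\le C_E\rho(f)$ with $C_E=C\|\chi_E\|_{p'(\cdot)}$. This establishes that $L^{p(\cdot)}(\R)$ is a Banach function space.

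\textit{Stage 2: the associate space.} I would show $[L^{p(\cdot)}(\R)]'=L^{p'(\cdot)}(\R)$ with equivalent norms. One inclusion and the norm bound $\rho'(g)\le C\|g\|_{p'(\cdot)}$ come again from generalized Hölder. For the reverse, given $g\in\cM^+$ with $\rho'(g)<\infty$, one constructs a near-extremal test function: take $f=g^{p'(\cdot)-1}$ suitably normalized (truncating on sets of finite measure to stay in $L^{p(\cdot)}$, then passing to the limit via Fatou), and computes that $\int fg\,dx$ recovers $\|g\|_{p'(\cdot)}$ up to a constant depending only on $p_\pm$. The standing assumption $p_+<\infty$ is exactly what makes this norm-conjugacy argument work (and what makes the associate space and Banach dual agree below).

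\textit{Stage 3: separability and reflexivity.} By Lemma~\ref{le:separability-AC}, separability is equivalent to absolute continuity of the norm, so it suffices to prove $L^{p(\cdot)}(\R)$ has absolutely continuous norm: if $\chi_{E_n}\to 0$ a.e., then for $f\in L^{p(\cdot)}(\R)$ and any $\lambda>0$, dominated convergence applied to the modular gives $I_{p(\cdot)}(f\chi_{E_n}/\lambda)\to 0$ (here finiteness of $p_+$ guarantees the integrand is dominated by $|f/\lambda|^{p(x)}\in L^1$ once $\lambda$ is chosen so that $I_{p(\cdot)}(f/\lambda)<\infty$), whence $\|f\chi_{E_n}\|_{p(\cdot)}\to 0$. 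The same argument applied to $p'$—which also satisfies $1<(p')_-\le (p')_+<\infty$ since $1<p_-\le p_+<\infty$—shows $L^{p'(\cdot)}(\R)$ has absolutely continuous norm. By \cite[Chap.~1, Corollaries~4.3 and~4.4]{BS88}, absolute continuity of the norm of $X(\R)$ yields $[X(\R)]^*\cong X'(\R)$ isometrically, and $X(\R)$ is reflexive iff both $X(\R)$ and $X'(\R)$ have absolutely continuous norm; both conditions are now met. Hence $L^{p(\cdot)}(\R)$ is separable and reflexive with $[L^{p(\cdot)}(\R)]^*\cong L^{p'(\cdot)}(\R)$.

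The main obstacle is Stage~2, the sharp identification of the associate norm: producing the near-extremal test function requires care with truncation and with the interplay between the modular and the norm when $p$ is non-constant, and it is precisely here that the hypothesis $p_+<\infty$ is indispensable. Everything else is a routine transcription of classical Lebesgue-space arguments with $p$ replaced by $p(\cdot)$ and constants tracked through the generalized Hölder inequality.
\end{proof}
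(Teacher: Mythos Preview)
The paper does not supply its own proof of Theorem~\ref{th:VLS-BFS}; it is stated as a well-known fact with a citation to \cite[Theorems~3.2.13 and 3.4.7]{DHHR11} and used as a black box. Your outline is therefore not competing with any argument in the paper, and as a sketch it is correct: the verification of Axioms (A1)--(A5), the identification of the associate space via the generalized H\"older inequality together with the conjugate test function $f=g^{p'(\cdot)-1}$, and the reduction of separability and reflexivity to absolute continuity of the norm on both $L^{p(\cdot)}(\R)$ and $L^{p'(\cdot)}(\R)$ (available precisely because $p_+<\infty$, hence also $(p')_+<\infty$) are exactly the standard steps, essentially those carried out in \cite{DHHR11}.
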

Now we are in a position to give a proof of Theorem~\ref{th:main}.
\begin{proof}[Proof of Theorem~\ref{th:main}]
\textit{Necessity.}
Theorem~\ref{th:VLS-BFS} immediately implies that if $p$ satisfies \eqref{eq:exponents},
then $L^{p(\cdot)}(\R)$ is a Banach function space and
\[
\cA_{p(\cdot)}(\R)=A_{L^{p(\cdot)}}(\R).
\]
From Theorem~\ref{th:bound-nec} it follows that that if $S$ is bounded on
the space $L^{p(\cdot)}(\R,w)$, then $w\in\cA_{p(\cdot)}(\R)$. The necessity
portion is proved.

\textit{Sufficiency.} From Theorem~\ref{th:VLS-BFS} we know that $L^{p(\cdot)}(\R)$
is a separable and reflexive Banach function space. If $w\in\cA_{p(\cdot)}(\R)$,
then $w\in L^{p(\cdot)}_{\rm loc}(\R)$, $1/w\in L^{p'(\cdot)}_{\rm loc}(\R)$, and
$1/w\in\cA_{p'(\cdot)}(\R)$. Further, it is easy to see that $p$ is globally log-H\"older
continuous if and only if so is $p'$. Hence, by Theorem~\ref{th:CDH}, the Hardy-Littlewood
maximal function is bounded on $L^{p(\cdot)}(\R,w)$ and on $L^{p'(\cdot)}(\R,w^{-1})$.
Applying Theorem~\ref{th:sufficiency}(a), we see that the operator $S$ is bounded
on $L^{p(\cdot)}(\R,w)$. This finishes the proof of Theorem~\ref{th:main}.
\end{proof}
Theorem~\ref{th:nice-relations} follows immediately from
Theorems~\ref{th:CDH}, \ref{th:sufficiency}, and \ref{th:VLS-BFS}.

\end{document}